\theoremstyle{definition}
\newtheorem*{pf}{Proof}
\renewenvironment{proof}[1]{\begin{pf}[#1]}{\hfill\rule[0.025cm]{0.21cm}{0.21cm}\end{pf}}
\theoremstyle{plain}
\newtheorem{thm}{Theorem}
\newtheorem{lem}[thm]{Lemma}
\newtheorem{claim}[thm]{Claim}
\newtheorem{prop}[thm]{Proposition}
\newtheorem{cor}[thm]{Corollary}
\theoremstyle{definition}
\newtheorem{df}[thm]{Definition}
\theoremstyle{remark}
\newtheorem{rem}[thm]{Remark}
\renewcommand{\Pr}{\mathrm {P}}
\newcommand{\HH}{{\mathbb{H}^2}}
\newcommand{\HHH}{{\mathbb{H}^3}}
\newcommand{\op}{\omega^{(p)}}
\newcommand{\opd}{\omega^{(p)\dag}}
\newcommand{\g}{\gamma}
\renewcommand{\d}{\delta}
\newcommand{\G}{\Gamma}
\newcommand{\GF}{\Gamma_{\mathrm{F}}}
\newcommand{\bd}{\partial\,}
\newcommand{\cX}{{\hat{X}}}
\newcommand{\cl}[1]{{\overline{#1}}}
\renewcommand{\c}{{\mathrm c}}
\newcommand{\clX}[1]{{\overline{#1}^X}}
\newcommand{\clcX}[1]{{\overline{#1}^\cX}}
\newcommand{\cH}{{\widehat{\mathbb{H}}^2}}
\newcommand{\clcH}[1]{{\overline{#1}^\cH}}
\newcommand{\sm}{\setminus}
\newcommand{\Isom}{\mathrm {Isom}}
\newcommand{\pc}{{p_\mathrm c}}
\newcommand{\pu}{{p_\mathrm u}}
\newcommand{\p}{{p_{1/2}}}
\newcommand{\imic}{infinitely many infinite clusters}
\newcommand{\ics}{infinite clusters}
\newcommand{\ic}{infinite cluster}
\newcommand{\hyp}{hyperbolic}
\newcommand{\as}{a.~s.}
\newcommand{\vttg}{vertex-transitive tiling graph}
\newcommand{\vttgs}{vertex-transitive tiling graphs}
\newcommand{\qi}{quasi-isometry}
\newcommand{\qic}{quasi-isometric}
\renewcommand{\{}{\left\lbrace}
\renewcommand{\}}{\right\rbrace}
\newlength{\cupdotwidth}
\newcommand{\cupdotsymb}{\makebox[\cupdotwidth]
  {\makebox[0pt]{$\displaystyle{\cup}$}\makebox[0pt]{$\cdot$}}}
\DeclareMathOperator*{\cupdot}{\cupdotsymb}
\newlength{\bigcapKXwidth}
\newcommand{\bigcapKX}{\makebox[\bigcapKXwidth]{$\displaystyle{\bigcap_{\substack{K\subseteq X\\ K\text{ -- compact}}}}$}}
\begin{document}

\title{Clusters in middle-phase percolation on~hyperbolic~plane}%\footnote{To appear in Banach Center Publications in Proceedings of the ``13th WORKSHOP: NON-COMMUTATIVE HARMONIC ANALYSIS''}}

%\abbrevauthors{J.~Czajkowski} 
%\abbrevtitle{Clusters in middle-phase percolation on~hyperbolic~plane}

\author{Jan Czajkowski\\
\small Mathematical Institute, University of Wroc{\l}aw\\
\small pl. Grunwaldzki 2/4, 50-384 Wroc{\l}aw, Poland\\
\small E-mail: {\tt czajkow@math.uni.wroc.pl}}

%\maketitlebcp

\maketitle

\begin{abstract}
I consider $p$-Bernoulli bond percolation %(I will write ``$p$-percolation'' or ``percolation'' for short)
on graphs of vertex-transitive tilings of the \hyp{} plane with finite sided faces (or, equivalently, on transitive, nonamenable, planar graphs with one end) and on their duals. It is known from \cite{BS} that in such a graph $G$ we have three essential phases of percolation, i.~e.
$$0<\pc(G)<\pu(G)<1,$$
where $\pc$ is the critical probability and $\pu$ -- the unification probability.
I prove that in the middle phase a.~s.~all the ends of all the \ics{} have one-point boundary in $\bd\HH$. This result is similar to some results in \cite{Lal}.
\end{abstract}

\section{Introduction}
%\section{Preliminaries}

For any graph $G$, let $V(G)$ denote its set of vertices and $E(G)$ -- its set of edges. A \textbf{percolation} on $G$ is a random subgraph of $G$ or, one can say, a probability measure on the space of subgraphs of $G$.
For any infinite connected graph $G$ and $p\in[0;1]$ let $\omega^{(p)}(G)$ denote the process of \textbf{$p$-Bernoulli bond percolation} on $G$, which is a random subgraph of $G$ formed by taking stochastically independently each edge of $G$ with probability $p$ to the random graph and taking all the vertices of $G$ to it. The components of $\op(G)$ are often called \textbf{clusters}. One can say that in some sense the clusters of $\op$ increase with the value of parameter $p$.\footnote{That is formalized in \cite{grim}, chapter 2.1.} When $p$ increases from $0$ to $1$, first we have \as{} no \ics{} and, suddenly, above some treshold we have \as{} some \ic{} in $\op$. When we let $p$ increase further above that treshold, it turns out that in the case of \vttg{} in the hyperbolic plane $\HH$ we have \as{} \imic{} in $\op$ for some period of time, and then, after another treshold, we get exactly one \ic{} till the value of $1$ (those infinitely many clusters ,,merge'' into one). Therefore we say about three phases of Bernoulli bond percolation in such graphs.\footnote{See also remark \ref{pureph}.} Let us define precisely those tresholds.
The \textbf{critical probability} (or \textbf{critical parameter}) $\pc(G)$ of any graph $G$ is defined to be the infimum of $p\in[0;1]$ such that \as{} there is some \ic{} in $\omega^{(p)}(G)$. Similarly, the \textbf{unification probability} $\pu(G)$ is the infimum of $p\in[0;1]$ such that $\omega^{(p)}(G)$ has exactly one \ic{} \as{}

\begin{figure}[h]
  \center
  \includegraphics[width=\textwidth,bb=0 0 563 182]{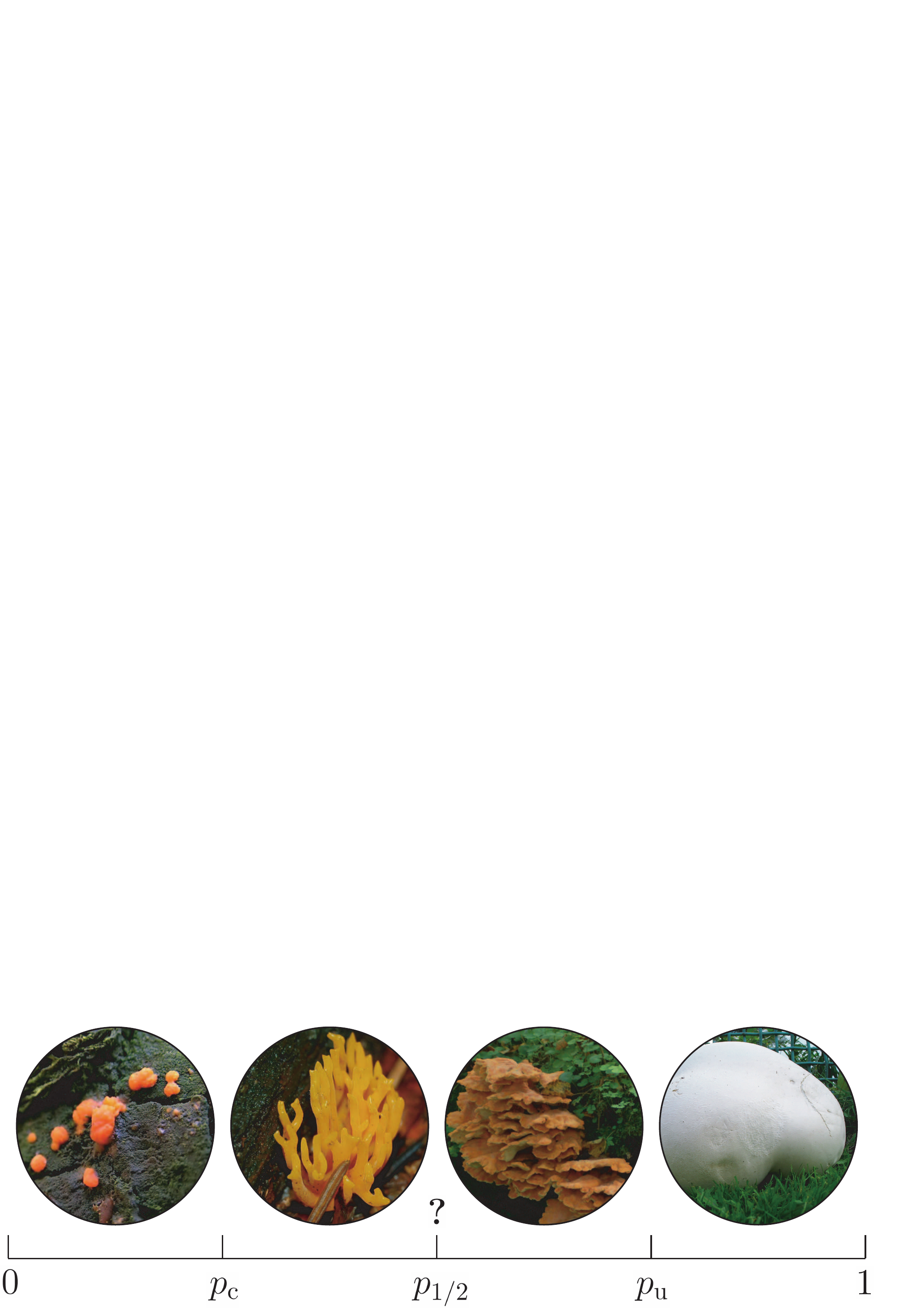}
  \caption{The idea of four phases of percolation in $\HHH$. Authors of the photos (from the left): 1.: Dorota R. ({\tt radzido}); 2., 3.: Agata Piestrzy\'nska-Kajtoch; 4.: Kazimiera Stelmach.\label{4ph}}
\end{figure}

\par A couple of important book on percolation, including the basics of percolation, are \cite{grim} and \cite{pl}. There is also paper \cite{BS} considering Bernoulli percolation on planar graphs in $\HH$ and having references on percolation on other planar graphs (e.~g. trees and lattice $\mathbb{Z}^2$); I base on that paper in this work.

\par The motivation for investigating the boundaries of ends of \ics{} comes from considering percolation phases in case of regular tilings of $\HH$ and the $3$-dimensional hyperbolic space $\HHH$.
Let us visualize $\HH$ and $\HHH$ as the Poincar\'e disc models.
\par On graphs of regular tilings of $\HHH$ we conjecturally have three phases of percolation. (It is due to conjecture 6 and question 3 in \cite{BS96}, see also theorem 10 of \cite{BB}. Inequality $\pc(G)<\pu(G)$ has been also established for some Cayley graphs of all nonamenable groups in \cite{PSN} and in some kind of continuous percolation in $\mathbb{H}^n$ in \cite{Tyk}.) So in the first phase (for $0\le p\le\pc$) we have a.~s.~only finite clusters, which roughly look like points (in large scale), so $0$-dimensional objects. In the last phase (for $\pu\le p\le 1$) there is only one big one-ended infinite cluster (one-ended means: after throwing out a bounded set it still has only one infinite component), so it looks like the whole Poincar\'e ball, which is of dimension 3. The conjecture of my advisor is that in the middle phase we have a.~s.~``$1$-dimensional'' (fibrous) \ics{} with $p$ below some treshold $\p\in(\pc;\pu)$ and ``$2$-dimensional'' (fan-shaped) with $p$ above $\p$ (see fig.~\ref{4ph}). So we should have $4$ phases -- one more than the dimension of the space ($\HHH$).
\par Following this idea, in the percolation on a graph of tiling of $\HH$ we should have only three such phases. We already know three phases of it by \cite{BS} - see theorem \ref{1.1}, so we want the clusters to be $0$-dimensional in the first phase, $1$-dimesional in the second and $2$-dimesional in the third. 
\par My formalization of $1$-dimesional is the following: all the ends of the infinite cluster have one-point boundary (which is to be explained further). The main result in this paper says that in the middle phase (for a graph of vertex-transitive tiling of $\HH$) the \ics{} are a.~s.~$1$-dimesional.

\subsection{Acknowledgement}
I wish to express gratitude to my advisor, Jan Dymara, who once proposed me percolation as master thesis topic and led me through doing it. My master thesis has developed to this article.

\section{Boundaries of ends}

Now I'm going to define the boundary of an end of an infinite cluster in $\HH$, but the definition is formulated in general.

\begin{df} \label{dfbd}
Let $X$ be a completely regular Hausdorff ($\mathrm T_{3\frac{1}{2}}$), locally compact topological space. Then:
\begin{itemize}
\item An end of a subset $a\subseteq X$ is a function $e$ from the family of all compact subsets of $X$ to the family of subsets of $a$ such that:%satisfying the following conditions:
\begin{itemize}
 \item for any compact $K\subseteq X$ the set $e(K)$ is one of the component of $a\setminus K$;
 \item for $K\subseteq K'\subseteq X$ -- both compact -- we have
 $$e(K)\supseteq e(K').$$
\end{itemize}
\end{itemize}
Now let $\cX$ be an arbitrary compactification of $X$. Then
\begin{itemize}
\item The boundary of $a\subseteq X$ is the following:
$$\bd a= \clcX{a}\setminus X$$
(by $\overline{a}^Y$ I mean the closure taken in the space $Y$).
\item Finally the boundary of an end $e$ of $a\subseteq X$ is%(considered as a subset of $\cX$) is
$$\bd e=\bigcapKX\bd e(K).$$
\end{itemize}
\end{df}

\begin{figure}[h]
  \center
  \includegraphics[width=0.5\textwidth,bb=0 0 365 435]{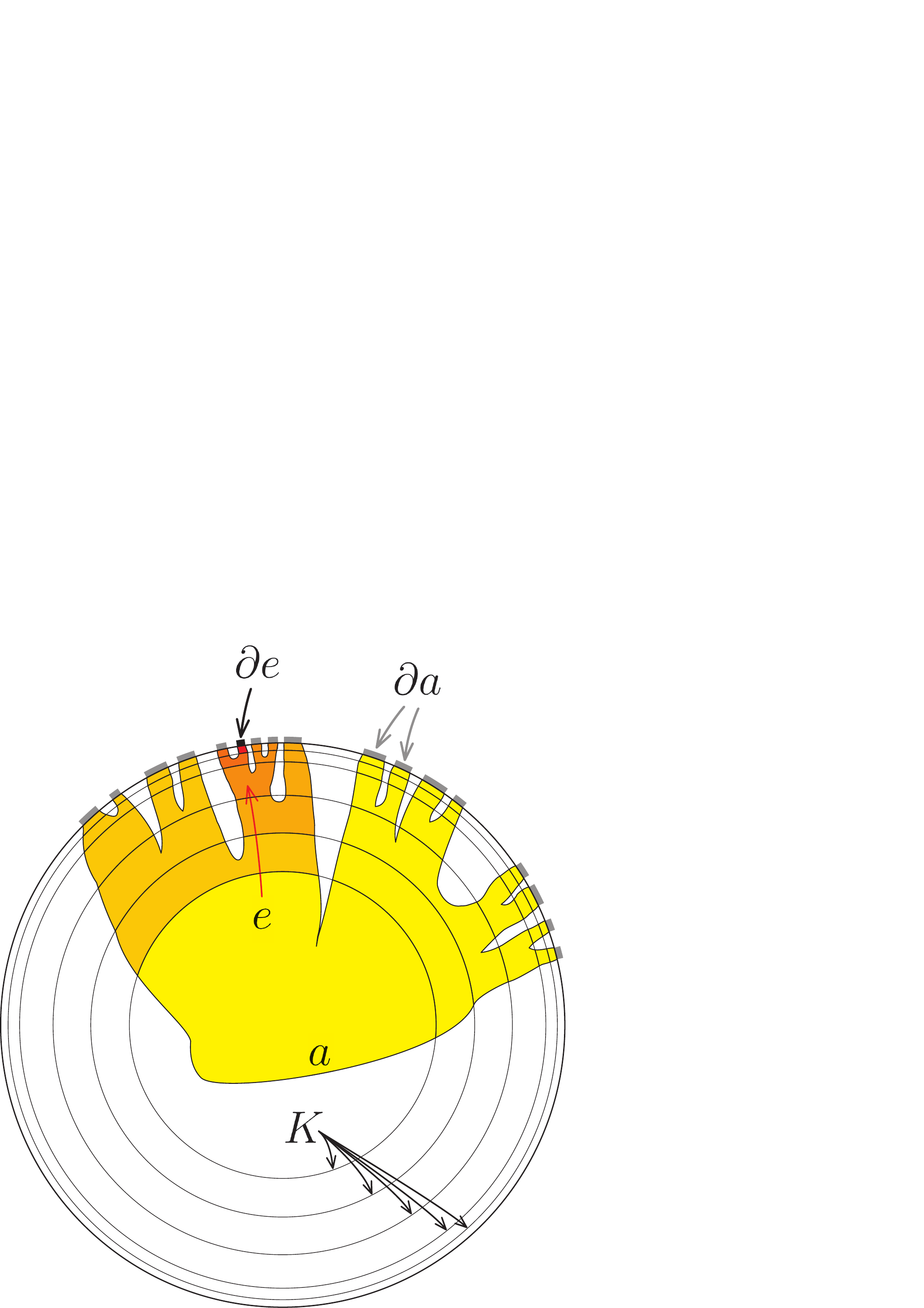}
  \caption{An end $e$ of a set $a$, its boundary $\bd e$ and the boundary $\bd a$ of the whole set in case of Poincar\'e disc.\label{bd}}
\end{figure}

In this paper I always take $X=\HH$ and $\cX=\cH$ (the closed Poincar\'e ball, i.~e. $\cH=\HH\cup\bd\HH$). The role of $a$ will be played by clusters of percolation in $\HH$.

\section{The graph}

Now I introduce some notions needed to explain what class of graphs I am considering.

\begin{df} %\label{park}
A \textbf{polygonal tiling} of $\HH$, or \textbf{tiling} of $\HH$ for short, is a family of hyperbolic polygons (in this paper by a \textbf{polygon} I mean only a finite sided polygon) which covers the \hyp{} plane, in such way that they have pairwise disjoint interiors and any two different of them are either disjoint, or intersect exactly at a sum of some of their sides and vertices.
The \textbf{graph of such tiling} as above is just the graph obtained from all the vertices and edges of the tiling. Obviously such graph is always a planar graph.
A \textbf{regular tiling} is a polygonal tiling by congruent regular polygons (regular polygons means: equilateral and equiangular).
\par A \textbf{plane graph} is a geometric realization of a planar graph in the plane (here in the definitions only the topology plays a role, so it does not matter if the plane is hyperbolic). \textbf{Faces} of a plane graph are the components of its complement in the plane.
Here I overload the notation, calling both the abstract planar graph and its plane realization by $G$ (although it does matter, see definition \ref{dfdual} of dual graph).

\end{df}

\begin{rem}
I declare all the graphs mentioned in this paper to be \textbf{simple}, i. e. not having multiple edges or loops, and \textbf{locally finite}, i.~e. having every vertex of finite degree).
\end{rem}

Further in this paper I consider graphs of polygonal tilings of $\HH$ which are vertex-transitive in the sense that some groups of isometries of $\HH$ preserving them act on their vertices transitively. I call such graphs \textbf{\vttgs}. I consider also their duals as well.

\begin{rem}
The main theorem is proven for all \vttgs{} (theorem \ref{main}) and their duals (corollary \ref{dualmain}). Earlier in my master thesis I considered only graphs of two special regular tilings of $\HH$ (one of them is shown on fig.~\ref{5}).
On the other hand, Lalley in paper \cite{Lal} proves similar facts about Bernoulli percolation for Cayley graphs of cocompact Fuchsian groups of genus at least $2$ and for some class of hyperbolic triangle groups (namely: groups of presentation $\langle c_1,c_2,c_3|c_1^2=c_2^{4m}=c_3^{4m}=c_1c_2c_3=1\rangle$, where $m\ge 5$).
%, which gives a bit smaller class of graphs than \vttgs{}.
%For example graphs of regular tilings all belong to \vttgs{} (not exhausting the class) but that ones of odd degree are not considered in \cite{Lal}, because they cannot be Cayley graphs. On the other hand the class considered there by Lalley is wider than that of graphs of regular tilings with even degree.
\end{rem}

\begin{figure}
  \center
  \includegraphics[width=0.5\textwidth,bb=0 0 341 341]{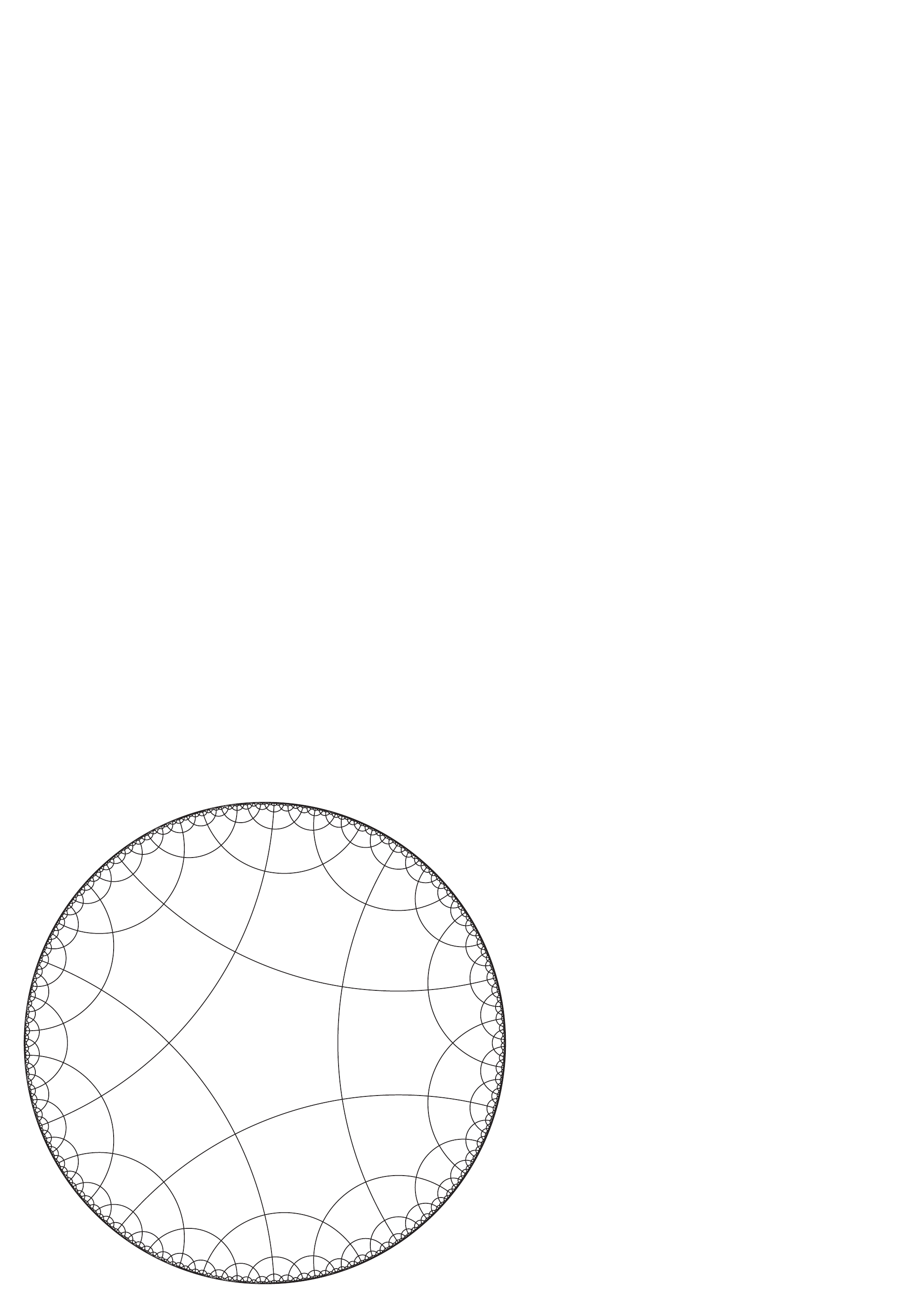}
  \caption{An example of tiling of $\HH$ by regular right-angled pentagons.\label{5}}
\end{figure}

First of all I prove that, indeed, on the graphs I consider, there are three essential phases of Bernoulli bond percolation, mentioned in the introduction. Before that, I define some properties of graphs:

\begin{df} \label{grprop}
Let $G$ be any locally finite graph. We define it to:
\begin{itemize}
%\item jest tranzytywny, gdy jego grupa automorfizm\'ow $\mathrm {Aut}(G)$ dzia{\l}a tranzytywnie na jego zbiorze wierzcho{\l}k\'ow.
\item \textbf{have one end}, if for any finite set $V_0\subseteq V(G)$ the subgraph induced on its complement $V(G)\sm V_0$ has exactly one unbounded component.
\item be \textbf{nonamenable}, if there is a constant $\varepsilon>0$ such that for every finite $V_0\subseteq V(G)$ we have $|\partial V_0|\ge\varepsilon|V_0|$, where $\partial V_0$ is the set of edges of $G$ with exactly one vertex in $V_0$. Otherwise we call $G$ \textbf{amenable}.
\end{itemize}
One defines also \textbf{edge isoperimetric constant} of $G$:
$$\Phi(G)=\inf\{\frac{|\partial V_0|}{|V_0|}:\emptyset\neq V_0\subseteq V(G)\text{ -- finite}\}.$$
Note that $G$ is nonamenable iff $\Phi>0$.
\end{df}

\begin{thm}
For any \vttg{} $G$ we have
$$0<\pc(G)<\pu(G)<1.$$
%and the number of \ics{} in $\op$ is \as{} equal to:
%\begin{itemize}
%\item $0$, when $0\le p\le\pc$;
%\item $0$, when $\pc< p\le\pc$;
%\item $1$, when $\pu\le p\le 1$;
%\end{itemize}
\end{thm}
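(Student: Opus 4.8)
The plan is to verify that a \vttg{} $G$ is a transitive, nonamenable, planar graph with one end, and then to quote the corresponding three-phase theorem of \cite{BS}; the real content lies in the geometric inputs (nonamenability, one end) and in the deep inequality $\pc(G)<\pu(G)$, which I would cite rather than reprove. Planarity and transitivity hold by definition. For nonamenability, note that the group $\Gamma\le\Isom(\HH)$ preserving the tiling acts on $\HH$ properly discontinuously (the tiling is locally finite) and cocompactly (there is a single vertex-orbit and, by local finiteness, boundedly many edges and faces at each vertex, so the quotient is compact). By the \v{S}varc--Milnor lemma $\Gamma$ -- and hence $G$, whose vertex set is one $\Gamma$-orbit -- is quasi-isometric to $\HH$. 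Since $\HH$ satisfies a linear (Cheeger) isoperimetric inequality and, for bounded-degree graphs, nonamenability is a quasi-isometry invariant, we get $\Phi(G)>0$. (A direct alternative is Gauss--Bonnet: the angle defect of a finite patch of faces is proportional to its hyperbolic area, which the isoperimetric inequality in $\HH$ bounds below by a constant times its boundary length, giving $|\partial V_0|\ge\varepsilon|V_0|$ directly.)

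That $G$ has one end is topological: $\HH$ is homeomorphic to the plane and $G$ is a locally finite connected net in it with faces of bounded size, so the complement of any large compact disk has a single unbounded component; removing any finite vertex set therefore leaves exactly one unbounded component of $G$.

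The two outer inequalities are then cheap. Local finiteness and transitivity bound the degree by some $d$, so the number of self-avoiding paths of length $n$ from a fixed vertex is at most $d(d-1)^{n-1}$, and the expected number of these that are open is at most $d(d-1)^{n-1}p^n$, which tends to $0$ as soon as $p<1/(d-1)$. Being in an \ic{} forces an open self-avoiding path of every length, so a first-moment bound gives \as{} no \ic{} for such $p$, whence $\pc(G)\ge 1/(d-1)>0$. For $\pu(G)<1$ I would pass to the dual graph $G^\ast$, which is again planar and of bounded degree (transitivity forces a single combinatorial vertex-type, hence bounded face size), so the same counting gives $\pc(G^\ast)>0$. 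Planar duality identifies $\op(G)$ with $\omega^{(1-p)}(G^\ast)$; when $1-p<\pc(G^\ast)$ the dual has \as{} only finite clusters, and since in a planar graph the existence of two distinct infinite primal clusters forces an infinite dual cluster separating them, the absence of infinite dual clusters makes the primal \ic{} unique. Hence $\pu(G)\le 1-\pc(G^\ast)<1$ (in fact $\pu(G)=1-\pc(G^\ast)$).

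The remaining inequality $\pc(G)<\pu(G)$ is the crux. Via the duality relation it is equivalent to $\pc(G)+\pc(G^\ast)<1$, i.e.\ to the assertion that the primal and its dual can percolate simultaneously, opening the middle phase in which there are \imic{}. This is exactly where nonamenability is indispensable: on amenable planar graphs such as $\mathbb Z^2$ one has $\pc(G)+\pc(G^\ast)=1$ and the middle phase degenerates to a point. I would invoke the theorem of \cite{BS} here. The main obstacle in a self-contained proof would be to show that there is \as{} no \ic{} at the single value $p=\pc(G)$ (continuity of percolation at criticality on nonamenable transitive planar graphs) and that the same holds for the dual at $1-\pc(G)$, so that the coexistence of infinitely many primal and dual \ics{} persists on a genuinely open interval rather than collapsing.
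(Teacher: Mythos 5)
Your proposal follows essentially the same route as the paper: verify that a \vttg{} is transitive, planar, nonamenable and one-ended, then invoke Theorem 1.1 of \cite{BS} for $0<\pc<\pu<1$; even your nonamenability argument (geometric action, \v{S}varc--Milnor, quasi-isometry invariance) matches the paper's, which merely routes the final comparison through the explicitly nonamenable graph $G_{\{5,5\}}$ via \cite{AP} instead of the Cheeger constant of $\HH$ directly. Your additional sketches of $\pc(G)>0$ (Peierls counting) and $\pu(G)<1$ (planar duality) are correct but redundant, since both inequalities are already part of the cited theorem.
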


\begin{proof}{}
Basing on the following theorem from \cite{BS} (theorem 1.1 there), it is enough to prove the assumptions of it about $G$:

\begin{thm} \label{1.1}
Let $G$ be a transitive, nonamenable, planar graph with one end. Then $$0<\pc(\G_A)<\pu(\G_A)<1,$$ for Bernoulli bond or site\footnote{Bernoulli \textbf{site} percolation is performed by removing vertices of the graph (instead of edges in bond percolation).} percolation on $G$.
\end{thm}
Planarity and transitiveness are obviously satisfied, so the remaining properties of $G$ to show are having only one end and nonamenability:
\begin{itemize} \item One end: \end{itemize}
\par Let $V_0$ be finite subset of $V(G)$. Remove $V_0$ from $V(G)$ and take the induced subgraph $G'$ (here I mean the plane graph). Take a \hyp{} ball $B$ which covers $V_0$ together with all the tiles meeting $V_0$. Now, for every two vertices not lying in $B$ there is a polygonal path $P_0$ in $\HH$ joining them and not intersecting $B$. We can replace that path by a path $P$ in graph $G$ chosen to go along perimeters of consecutive tiles visited by $P_0$. That path may meet $B$, but is still disjoint with $V_0$. Hence all vertices in $V(G)\sm B$ lie in one component of $G'$. But the rest of vertices of $G$ lie in $B$, so there are finitely many of them, whence $G'$ has exactly one unbounded component.
\begin{itemize} \item Nonamenability: \end{itemize}
We need to introduce some notions:
\begin{df}
Let $(X,d_X)$, $(Y,d_Y)$ be arbitrary metric spaces. A \textbf{\qi} between $X$ and $Y$ is a (not necessarily continuous) map $f:X\to Y$ s. t. there exist constants $A\ge 1$ and $B\ge 0$ satisfying for all $x,y\in X$
$$\frac{1}{A}d_X(x,y)-B\le d_Y(f(x),f(y))\le Ad_X(x,y)+B$$
and, moreover, for any $y\in Y$ the distance of $y$ from $\mathrm{Im}f$ does not exceed $B$.
\par Let group $\G$ act by isometries on a metric space $X$. We say that this action is \textbf{proper} if for each $x\in X$ there exist $r>0$ such that the set $\{\g\in\G:B(x,r)\cup\g B(x,r)\neq\emptyset\}$ is finite, where $B(x,r)$ denotes metric ball in $X$ of origin $x$ and radius $r$. For isometric action of $\G$ on $X$ also, we call that action \textbf{cocompact} if the orbit of some compact subset of $X$ covers $X$.
\par One more notion will be convenient to use: for a tiling of $\HH$ and its vertex $x$, the \textbf{star} of $x$ is the sum of all tiles containing $x$. Its interior is called \textbf{open star} of $x$.\footnote{The general definition of star (e.~g. for simplicial complexes) has a bit different form, see for example definition 7.3, chapter I.7 in \cite{BH}.}
\end{df}
It is straightforward to check that nonamenability of graphs is invariant on quasi-isometry of graphs (with usual graph metric). (To prove that, one can use an alternative, but equivalent definition of nonamenability dealing with the set $\mathcal{N}_r(V_0)\sm V_0$ instead of $\partial V_0$ from above definition, where $\mathcal{N}_r(V_0)\subseteq V(G)$ is neighbourhood of radius $r$ of given finite set $V_0$ of vertices.)
\par So it is sufficient to show that $G$ is quasi-isometric to some nonamenable graph. That comparison graph will be the graph of regular tiling of $\HH$ by regular pentagons, five of them meeting in each vertex of the tiling. Let us call this graph  $G_{\{5,5\}}$.\footnote{$\{5,5\}$ is so-called Schl\"afli symbol of that regular tiling.} It is indeed nonamenable, because basing on \cite{AP}, theorem 4.1, the edge isoperimetric constant $\Phi(G_{\{5,5\}})$ can be calculated as $\sqrt{5}$, which is strictly positive. The \qi{} will be shown in a couple of steps. First, $G$ is \qic{} to some group $\G$ of isometries of $\HH$ acting transitively on $V(G)$. It is so because the action of such $\G$ on $G$ is proper and cocompact\footnote{In other words: \textbf{geometric}.} (considered with graph metric on $G$ -- not only on $V(G)$). The properness, roughly speaking, follows from finiteness of the subgroups fixing any vertex, and cocompactness -- from transitivity. Hence, by the \v{S}varc-Milnor Lemma (stated in \cite{BH}, chapter I.8, as prop. 8.19) $\G$ is finitely generated and $\G$ and $G$ are \qic{}.\footnote{Here $\G$ considered with the word metric, which -- up to \qi{} -- does not depend on the choice of finite generating set of $\G$.} Similarly, $\G$ is \qic{} to the \hyp{} plane $\HH$ itself: the action of $\G$ on $\HH$ is proper (take a ball included in the open star of a vertex in the tiling which contains given point of $\HH$) and cocompact (take the star of a vertex). In that way we showed a \qi{} between $G$ and $\HH$ (using transitiveness of \qi{}). In particular, it is true when we take $G=G_{\{5,5\}}$, so in our setting also $G$ and $G_{\{5,5\}}$ are \qic{}, as we desired. Hence $G$ is nonamenable.
The above completes the proof of the theorem.
\end{proof}

\begin{rem} \label{pureph}
In fact, for $p\in[0;\pc]$ there are \as{} no \imic{} in $\op$, there are \as{} $\infty$ of them for $p\in(\pc;\pu)$ and exactly $1$ for $p\in[\pu;1]$ (so we have three essential and pure phases, determined by the number of \ics{}). The same is true about the dual $G^\dag$ (see section \ref{secdual} for notions of duality).
Those remarks can be easily deduced from theorem 1.1, 3.7 and 1.3 of \cite{BS} (see also proofs of theorems 1.1 and 3.8 there; the fact that the event of existence of an infinite cluster is increasing should be used; for increasing and decreasing events, see \cite{grim}, chapter 2.1, especially theorem 2.1).
\end{rem}

\begin{rem} \label{clgr}
One can easily deduce from the proof of proposition 2.1 from \cite{BS} that in fact any transitive, nonamenable, planar graph with one end can be realized as a \vttg{} in $\HH$. Hence \vttgs{} are all the graphs known by \cite{BS} to have three essential phases of Bernoulli bond percolation.
\par It turns out also that, in that setting, the property that all the \ics{} of the random subgraph have one-point boundaries of ends, does not depend on the embedding of the underlying whole graph in $\HH$, but just on the abstract graph. This can be explained in terms of Gromov boundary\footnote{For basics on Gromov boudaries, see \cite{BH}, chapter III.H.3.}: $\bd\HH$ can be defined as the Gromov boundary of $\HH$. On the other hand, when graph $G$ is embedded by a \qi{} in $\HH$ (it is then closed in $\HH$), then by \cite{BH}, chapter III.H, theorem 1.9, $G$ is hyperbolic (in the sense of Gromov) and from theorem 3.9 from that chapter that \qi{} induces a homeomorphism of the Gromov boundaries of $G$ and $\HH$. Let $\hat{G}$ be the compactification of abstract graph $G$ by its Gromov boundary. Then one can embed $\hat{G}$ in $\cH$ sending $\bd G$ onto $\bd\HH$ by above homeomorphism. One can easily check that for a subset $A$ of $G$ its ends and boundaries of ends are in principle the same as when $A$ is considered a subset of above embedding in $\HH$. It follows that phenomenon of $1$-dimensional clusters occuring on abstract $\hat{G}$ and on $\cH$ agree.
\end{rem}

\section{Main theorem}

Before I prove the main theorem (theorem \ref{main}), I need following lemmas.
\par Let $G$ be a \vttg{} and $\op$ be $p$-Bernoulli bond percolation process on $G$ in the middle phase.

\begin{lem} \label{D}
The limits in $\bd\HH$ of paths in $\op$ a.~s.~lie densely in $\bd\HH$.
\end{lem}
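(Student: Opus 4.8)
The plan is to show that the limit set (in $\bd\HH$) of infinite paths in $\op$ is a.s. dense, by exploiting the transitivity of the underlying graph together with a $0$-$1$ law for the relevant event. The event in question --- ``the closure of the union of infinite $\op$-paths meets a given open arc $I\subseteq\bd\HH$'' --- is not quite invariant under the whole isometry group, so the first thing I would do is reformulate density as a countable intersection of ``hitting'' events: fixing a countable base $\{I_n\}$ of open arcs of $\bd\HH$, density of the limit set is exactly the event $\bigcap_n\{\text{some infinite }\op\text{-path has a limit point in }I_n\}$. Since a countable intersection of a.s.-events is a.s., it suffices to prove that for each fixed open arc $I$ the probability that some infinite $\op$-path accumulates in $I$ equals $1$.

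For a single arc $I$, I would first argue that this probability is positive, and then upgrade positivity to $1$. Positivity follows from the existence of infinite clusters in the middle phase: by the results quoted in Remark~\ref{pureph}, for $p\in(\pc;\pu)$ there are a.s.\ infinitely many infinite clusters, and each infinite cluster, being a connected unbounded subset of $\HH$, contains an infinite self-avoiding path, which (the graph being quasi-isometrically embedded in $\HH$, cf.\ Remark~\ref{clgr}) must converge to a point of $\bd\HH$. So the limit set is a.s.\ nonempty; by transitivity of $\G$ on $V(G)$ and hence the essential transitivity of the induced action on $\bd\HH$, the limit set cannot a.s.\ avoid a fixed nonempty open arc with full probability, giving $\Pr(\text{limit set meets }I)>0$.

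The upgrade from positive probability to probability $1$ is where I would use insensitivity of the event to local changes together with ergodicity. The cleanest route is a Burton--Keane / group-ergodicity argument: the product measure $\op$ is invariant and ergodic under the action of the (nonamenable, but in particular infinite) automorphism group $\G$ of $G$, and the event $E_I=\{\text{some infinite path accumulates in }I\}$, once one takes its union over all $\g\in\G$-translates, becomes $\G$-invariant. More precisely, I would consider the $\G$-invariant event $E=\bigcup_{\g\in\G}\g E_I=\{\text{the limit set meets some }\G\text{-translate of }I\}$; since the $\G$-action on $\bd\HH$ moves $I$ around so that its translates cover all of $\bd\HH$ (the limit set of $\G$ is all of $\bd\HH$), $E$ coincides with the event that the limit set is nonempty, which has probability $1$. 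Ergodicity of $\op$ under $\G$ then forces $\Pr(E_I)\in\{0,1\}$, and since we have shown $\Pr(E_I)>0$ (equivalently $\Pr(E)=1$ together with transitivity of the arc-translates), we conclude $\Pr(E_I)=1$.

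The main obstacle I anticipate is the ergodicity/$0$-$1$ step: one must be careful that the hitting event $E_I$ for a \emph{fixed} arc is genuinely governed by an ergodic invariant measure, because $E_I$ itself is not $\G$-invariant (an isometry rotates $I$ to a different arc). The correct formulation therefore requires either (i) checking that $\G$ acts ergodically on the percolation configuration space and that the collection of arc-hitting events is permuted transitively enough by $\G$ to force a common value of the probability, or (ii) replacing the rigid ergodicity argument with a direct insertion-tolerance argument: using finite-energy (insertion tolerance) of Bernoulli percolation to show that, conditioned on an infinite cluster existing somewhere, one can with positive probability reroute or extend a path so that its accumulation point falls into $I$, and then invoke indistinguishability of infinite clusters (Lyons--Schramm) to spread this to all clusters. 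I expect the subtle point to be justifying that the symmetry group acts with limit set all of $\bd\HH$ and transitively enough on arcs, which rests on the cocompactness and nonamenability already established in the proof of Theorem~\ref{1.1}.
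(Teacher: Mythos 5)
Your reduction of density to countably many arc-hitting events $E_I$ is fine, but the argument has two genuine gaps, and they occur exactly at the two points where the paper instead quotes results of \cite{BS}. First, the positivity step: you claim that an infinite cluster contains an infinite self-avoiding path which ``must converge to a point of $\bd\HH$'' because $G$ is quasi-isometrically embedded in $\HH$. This is false: a quasi-isometric embedding constrains geodesics, not arbitrary paths, and an infinite self-avoiding path in a tiling graph can oscillate so that its accumulation set is a nondegenerate arc or all of $\bd\HH$, with no limit point at all. The existence, in every infinite cluster, of a path with a \emph{unique} limit point in $\bd\HH$ is precisely the nontrivial Theorem 4.1 of \cite{BS} (Theorem \ref{BSlim} in this paper), so your proposal silently assumes the first of the two external inputs.

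Second, the $0$--$1$ upgrade does not close. As you yourself note, $E_I$ is not $\G$-invariant, so ergodicity of the Bernoulli measure says nothing about $\Pr(E_I)$ directly; your invariant union $E=\bigcup_{\g\in\G}\g E_I$ having probability $1$ yields only $\Pr(E_I)>0$ (via countability of $\G$ and invariance of the measure), not $\Pr(E_I)=1$. Neither of your proposed patches is carried out, and patch (ii) fails as stated: ``having a limit point in the fixed arc $I$'' is not an automorphism-invariant property of a cluster (it depends on the cluster's position in $\HH$), so Lyons--Schramm indistinguishability does not apply to it. This upgrade is exactly the content of Lemma 4.3 of \cite{BS} (Lemma \ref{BSdns}), which the paper cites rather than reproves, and whose proof needs an idea beyond ergodicity. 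If you want a self-contained repair in the spirit of this paper, you could imitate Lemma \ref{H}: by Theorem \ref{BSlim} some halfplane contains a converging path with positive probability; the event that a halfplane $H$ contains a path converging to a point of $\clcH{H}\cap\bd\HH$ depends only on edges meeting $H$; by Proposition \ref{H2H} and invariance every halfplane carries such an event with the same positive probability; and infinitely many pairwise disjoint, sufficiently separated halfplanes with ideal boundary inside $I$ give independent events, forcing $\Pr(E_I)=1$. As written, however, your argument does not prove the lemma.
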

%I will call this property of $\op$ and $\opd$ \textbf{(D)}.

\begin{proof}{}
This can be deduced from the theorem 4.1 and lemma 4.3 of \cite{BS}, which I quote here:

\begin{thm} \label{BSlim}
Let $T$ be a vertex-transitive tiling of $\HH$ with finite sided faces, let $G$ be the graph of $T$, and let $\omega$ be Bernoulli percolation on $G$. Almost surely, every infinite component of $\omega$ contains a path that has a unique limit point in $\bd\HH$.
\end{thm}
The following lemma is formulated for \textbf{invariant percolation process} on $G$, i.~e. random subgraph process whose probability distribution is invariant on some vertex-transitive group action on $G$. Bernoulli bond percolation is an example of invariant percolation.
\begin{lem} \label{BSdns}
Let $T$ be a vertex-transitive tiling of $\HH$ with finite sided faces, let $G$ be the graph of $T$, and let $\omega$ be invariant percolation on $G$. Let $Z$ be the set of points $z\in\bd\HH$ such that there is a path in $\omega$ with limit $z$. Then \as{} $Z=\emptyset$ or $Z$ is dense in $\bd\HH$.
\end{lem}

Basing on theorem \ref{BSlim} and on remark \ref{pureph}, in our situation there are \as{} some paths in $\op$ with limit points in $\bd\HH$ and hence set $Z$ from lemma \ref{BSdns} is \as{} dense in $\bd\HH$.
\end{proof}

\begin{rem}
In special case of $G$ being the graph of regular tiling of $\HH$ with right-angled pentagons and $p>\frac{1}{2}$, this lemma can also be proved in the following more elementary way, similar to the technique used in proof of theorem 1 in \cite{Lal} (on p. 171):
\par I embed an infinite complete binary tree in the graph $G$ (see fig.~\ref{rys_drz}). (It is done using \hyp{} geometry.)
\begin{figure}[t]
  \center
  \includegraphics[width=0.5\textwidth,bb=0 0 462 462]{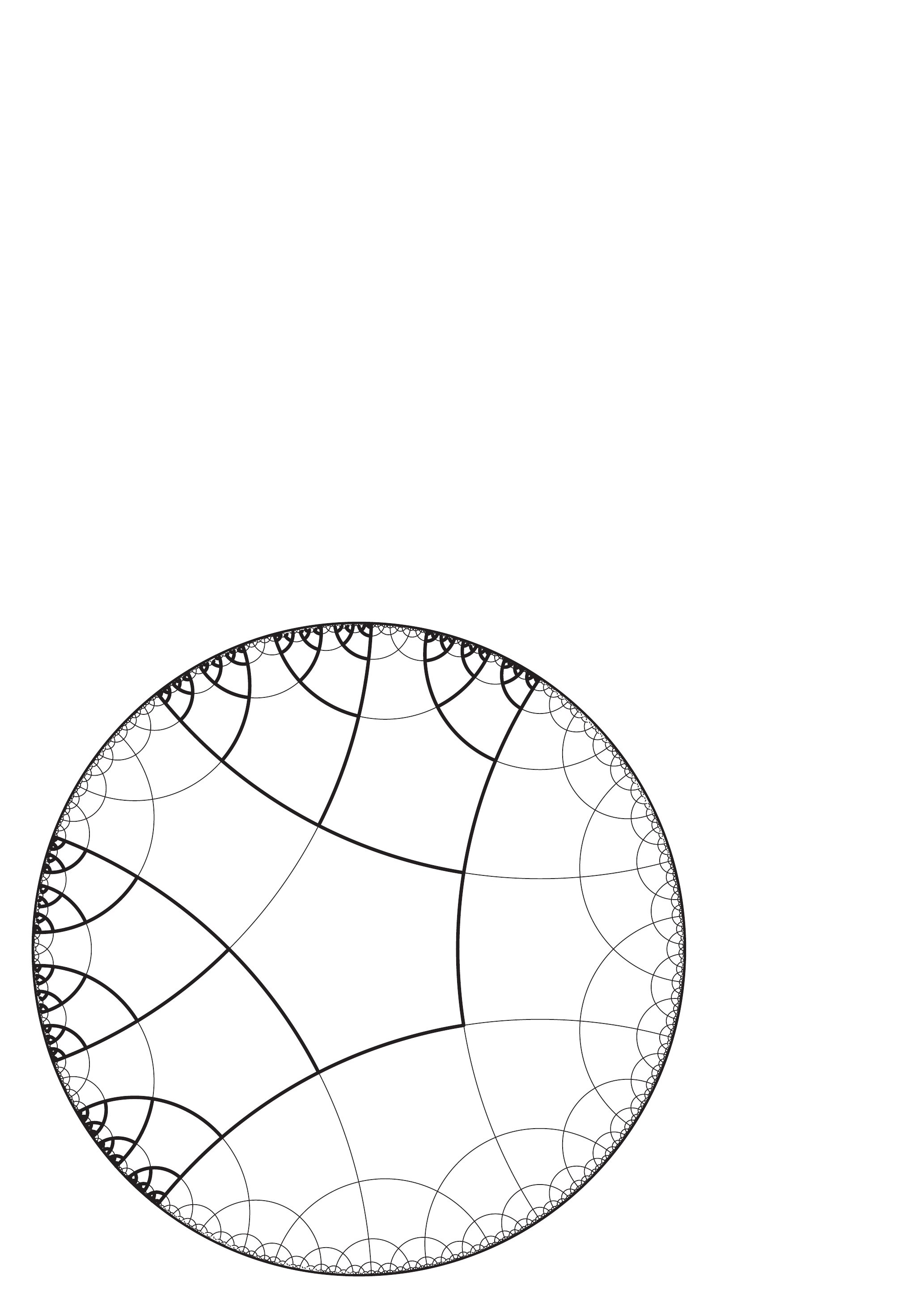}
  \caption{Infinite complete binary tree embedded %in $G$ (on the left) and
in $G$ %(on the right)
in lemma \ref{D}.\label{rys_drz}}
\end{figure}
\par When I have such a tree $T$ embedded in $G$, I can move it by an isometry $\g$ preserving $G$ so that $\bd\g(T)$ will be included in arbitrary (small) arc $\Phi$ of $\bd\HH$ (see proposition \ref{H2H}). The random graph $\op\cap\g(T)$ is $p$-percolation process on the tree $\g(T)$, where the critical probability equals $\frac{1}{2}$. So for $p>\frac{1}{2}$ we a.~s.~obtain an open infinite path in $\op\cap\g(T)$ with limit in $\bd\g(T)\subseteq \Phi$. Such limits lie a.~s.~densely in $\bd\HH$.\hfill\rule[0.025cm]{0.21cm}{0.21cm}
%\par The case of $G^\dag$ is done in the same way, noting that $\opd$ is just \linebreak $(1-p)$-percolation on $G^\dag$.
\end{rem}

\begin{lem} \label{H}
In the middle phase a.~s.~every halfplane meets \imic{} of $\op$.
\end{lem}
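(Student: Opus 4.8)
The plan is to reduce the statement to a single claim about the limit sets $\bd C$ of \ics{} and then to exploit planar duality. Throughout, write $I\subseteq\bd\HH$ for the open boundary arc cut off by the bounding geodesic of a halfplane $H$, so that all sufficiently small neighbourhoods in $\cH$ of a point of $\sint I$ are contained in $H$. The whole argument rests on two events, each invariant under the transitive isometry group $\G$ and hence — Bernoulli percolation being $\G$-ergodic — of probability $0$ or $1$: the event $A$ that some \ic{} $C$ of $\op$ has boundary $\bd C$ containing a nondegenerate arc of $\bd\HH$, and the event $B$ that some halfplane meets only finitely many \ics{} (measurability of $B$ follows by restricting to halfplanes with endpoints in a countable dense set and using monotonicity of the number of clusters met under inclusion). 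I would first establish the deterministic inclusion $B\subseteq A$, valid on the almost sure event of Lemma \ref{D}. Indeed, if a halfplane $H$ meets only the \ics{} $C_1,\dots,C_m$, then any path of $\op$ whose unique boundary limit $z$ lies in $\sint I$ has a tail lying in every neighbourhood of $z$, hence eventually inside $H$; being an infinite path it lies in an \ic{}, which therefore meets $H$ and so is one of $C_1,\dots,C_m$. By Lemma \ref{D} such limits $z$ are dense in $I$, so the finite union of closed sets $\bigcup_j\bd C_j$ contains a dense subset of $I$, hence contains $I$; by Baire's theorem one $\bd C_j$ then has nonempty interior in $\bd\HH$, i.e.\ contains an arc, so $A$ occurs.

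It remains to prove $\Pr(A)=0$, which is the heart of the matter. Here I would use planar duality together with Remark \ref{pureph}: since the unification and critical thresholds of a planar graph and its dual are interchanged under $p\mapsto 1-p$, the middle phase for $\op$ on $G$ corresponds exactly to the middle phase for $\opd$ on $G^\dag$. In particular $\opd$ also has \imic{}, and Lemma \ref{D} applied to $G^\dag$ shows that the boundary limits of dual paths are a.s.\ dense in $\bd\HH$. The key geometric input is that a bi-infinite open dual path is a barrier: every primal edge it crosses is $\op$-closed, so by the Jordan curve theorem in $\cH$ no primal \ic{} can accumulate on both sides of it. Consequently, if some primal cluster had $\bd C$ containing an arc $I'$, one would produce a bi-infinite dual open path with both endpoints inside $\sint I'$, separating off a proper sub-arc of $I'$; the primal cluster, confined to one side of this barrier, could not then accumulate on all of $I'$, a contradiction. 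Hence $\Pr(A)=0$.

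Combining the two steps gives $\Pr(B)\le\Pr(A)=0$, so almost surely every halfplane meets \imic{} of $\op$, which is exactly the assertion of the lemma. I expect the genuine obstacle to be the italicised construction in the previous paragraph: manufacturing a dual barrier whose two limit points both fall inside a prescribed arc. Density of dual limits (Lemma \ref{D} for $G^\dag$) only furnishes single boundary limit points, and Theorem \ref{BSlim} only guarantees each infinite dual cluster one path with one limit point; upgrading this to a genuinely two-ended dual path straddling a sub-arc of $I'$, and carefully justifying the Jordan-curve separation in the disc model where primal and dual paths interact only through dual–primal edge crossings, is the delicate part of the argument.
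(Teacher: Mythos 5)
Your first step (the deterministic inclusion $B\subseteq A$ on the almost sure event of Lemma \ref{D}, via closedness of the finitely many limit sets and Baire category) is correct, but the proof fails at exactly the point you flag: $\Pr(A)=0$ is never established, and nothing available in the paper supplies it. Lemma \ref{D} and Theorem \ref{BSlim} (applied to $\opd$, which by Remark \ref{pureph} is indeed in its middle phase) produce only one-ended paths with a single boundary limit point; upgrading this to a bi-infinite dual open path with \emph{both} limit points inside a prescribed small arc $\sint I'$ requires fine control of the boundary behaviour of dual clusters that is of the same depth as the paper's main theorem itself --- and the main theorem is proved \emph{using} Lemma \ref{H}, so filling your gap by anything of that strength risks circularity. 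Worse, the statement $\Pr(A)=0$ (no cluster limit set contains an arc, i.e.\ Cantor-type structure of limit sets in the middle phase) is strictly stronger than anything proved in this paper: even the main theorem does not formally exclude $\bd C$ containing an arc, since a cluster has infinitely many ends and their one-point boundaries could a priori be dense in an arc. So the heart of your argument is an unproven input, not a delicate finishing touch.

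The paper's own proof sidesteps all of this and uses no duality. Assume with positive probability some halfplane $H$ meets only finitely many \ics{}; since by Remark \ref{pureph} there are a.s.\ \imicp, the complementary halfplane $H'=\cl{H^\c}$ then entirely contains infinitely many of them. Proposition \ref{H2H} (movability of halfplanes by the discrete isometry group $\G$, via hyperbolic translations with dense fixed points) lets one push $H'$ into each member of a sequence of pairwise disjoint halfplanes $H_1,H_2,\ldots\subseteq H$ whose mutual distances exceed twice the maximal edge length. The event that a halfplane entirely contains \imic{} is determined by the edges meeting that halfplane, so the events for $H_1,H_2,\ldots$ are independent, and by $\G$-invariance each has probability at least $\Pr\left(C(H')\right)>0$; hence a.s.\ some $H_n$, and therefore $H\supseteq H_n$, meets \imic{}, a contradiction. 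If you wish to salvage your reduction $B\subseteq A$, you would still need either this halfplane-moving independence argument or an external citation for the Cantor structure of limit sets (e.g.\ results of the type in \cite{Lal}); as written, your route does not close.
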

%Let's call this graph property \textbf{(H)}.

\begin{rem}
In this paper a \textbf{halfplane} is always closed.
\end{rem}

Before the proof of the lemma let us consider a group $\G$ of isometries of $\HH$ which acts transitively on vertices of $G$ (by the assumption on $G$). One can easily see that $\G$ is a discrete subgroup of $\Isom(\HH)$ (with the usual topology), because it preserves a tiling of $\HH$. Basing on that we are going to say something about the action of $\G$ on $\HH$ using basic theory of Fuchsian groups, which can be found in \cite{K}.

\begin{df}
There are three kinds of orientation preserving isometries of $\HH$ other than identity: so-called hyperbolic, parabolic and elliptic. That classification is based on how many fixed points in $\bd\HH$ has such isometry (it makes sense, since every isometry of $\HH$ extends continuously in a unique way to a homeomorphism of $\cH$). Such isometries with exactly two fixed points in $\bd\HH$ are \textbf{hyperbolic}, one fixed point -- \textbf{parabolic} and no fixed points -- \textbf{elliptic}. One may think of hyperbolic and elliptic isometries as of analogues of translations and rotations in Euclidean plane, respectively. Some basics of these notions are present in \cite{K}.
\par A \textbf{Fuchsian group} is discrete subgroup of $\Isom(\HH)$ consisting only of orientation preserving isometries of $\HH$.
The \textbf{limit set} of a Fuchsian group $\GF$ is the boundary $\bd\GF x_0$ of orbit $\GF x_0$ of some point $x_0\in\HH$ (one can observe that it does not depend on the choice of $x_0$).
\end{df}
Let $\GF$ be the Fuchsian group of all orientation preserving isometries in $\G$. (The index of this subgroup in $\G$ is at most $2$.) We claim that $\GF$ acts cocompactly on $\HH$. Indeed, since $\G$ itself acts cocompactly on $\HH$, which means that there exists a compact set $K\subseteq\HH$ s.~t. the family $\G K$ covers $\HH$, then if we take $K\cup \g K$, where $\g$ is some orientation changing isometry $\g\in\G$, we have covering of $\HH$ by $\GF(K\cup \g K)$.
\par Next we observe that the limit set of $\G$ is the whole $\bd\HH$. If it were not, then some halfplane would be disjoint with some orbit of a point in $\GF$, which is impossible because of cocompactness of $\GF$. So, by theorem 3.4.4 from \cite{K}, the set of fixed points in $\bd\HH$ of hyperbolic translations is dense in $\bd\HH$.
\par That gives us the following fact:

\begin{prop} \label{H2H}
Every halfplane $H_1$ in $\HH$ can be mapped into any halfplane $H_2$ by some isometry in $\GF$ (and hence in $\G$).
\end{prop}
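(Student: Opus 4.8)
The plan is to translate the assertion into a statement about the boundary circle $\bd\HH$ and then drive it with the north--south dynamics of hyperbolic isometries.

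First I would pass to the boundary. Every isometry of $\HH$ extends to a homeomorphism of $\cH$ preserving $\bd\HH$ and carrying geodesics to geodesics and closed halfplanes to closed halfplanes. To a closed halfplane $H$ I associate the closed boundary arc $\hat H:=\clcH{H}\cap\bd\HH$, whose endpoints are the two endpoints of the bounding geodesic of $H$. Since a halfplane is determined by its bounding geodesic together with the side on which it lies, for $\phi\in\GF$ one has $\phi(H_1)\subseteq H_2$ as soon as $\phi(\hat H_1)$ lies in the interior of $\hat H_2$: then the bounding geodesic of $\phi(H_1)$ has both endpoints in $\mathrm{int}\,\hat H_2$, so it does not cross the bounding geodesic of $H_2$ and $\phi(H_1)$ falls on the correct side. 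Thus it suffices to produce $\phi\in\GF$ with $\phi(\hat H_1)\subseteq\mathrm{int}\,\hat H_2$.

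The engine is the standard fact that a hyperbolic $\psi\in\Isom(\HH)$ with attracting fixed point $p^+$ and repelling fixed point $p^-$ satisfies: for every neighbourhood $U$ of $p^+$ and every compact $C\subseteq\bd\HH\setminus\{p^-\}$ there is $k$ with $\psi^k(C)\subseteq U$. Hence it is enough to find a hyperbolic $\psi\in\GF$ whose attracting fixed point lies in $\mathrm{int}\,\hat H_2$ and whose repelling fixed point lies in the complementary open arc $\bd\HH\setminus\hat H_1$; then $\hat H_1$ is a compact subset of $\bd\HH\setminus\{p^-\}$, and a high power $\phi=\psi^k$ compresses it into $\mathrm{int}\,\hat H_2$. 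To build $\psi$ I would use the density in $\bd\HH$ of fixed points of hyperbolic elements of $\GF$, just established above from Katok's theorem 3.4.4. Choose a hyperbolic fixed point $a\in\mathrm{int}\,\hat H_2$ and a hyperbolic fixed point $b$ in the nonempty open arc $\bd\HH\setminus\hat H_1$; since each arc contains infinitely many hyperbolic fixed points, and since in a discrete group two hyperbolic elements sharing one boundary fixed point must share both, I may take $a,b$ to be fixed points of hyperbolic $g_1,g_2\in\GF$ with disjoint fixed-point pairs, and (replacing $g_i$ by $g_i^{-1}$ if needed) with $a$ attracting for $g_1$ and $b$ repelling for $g_2$. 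A ping--pong argument then produces $\psi$: for large $m,n$ the product $g_1^{m}g_2^{n}$ is hyperbolic with attracting fixed point arbitrarily close to $a$ and repelling fixed point arbitrarily close to $b$ (apply $g_2^{n}$ then $g_1^{m}$ to a generic point, and the same reasoning to the inverse $g_2^{-n}g_1^{-m}$). As the two target arcs are open, large $m,n$ place these fixed points inside them, so $\psi=g_1^{m}g_2^{n}$ is as required.

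The main obstacle is exactly the control of the repelling fixed point. Bare density of hyperbolic fixed points puts $p^+$ inside $\hat H_2$ for free but gives no hold on $p^-$, and if $p^-$ falls in the interior of $\hat H_1$ the contraction fails completely: then $\psi^k(\hat H_1)$ is a closed arc still passing through the fixed point $p^-$ while its endpoints tend to $p^+$, so it swells to almost all of $\bd\HH$ instead of shrinking. Decoupling the attracting and repelling fixed points into the two prescribed arcs is precisely what the two--generator ping--pong construction accomplishes, and this is the step I expect to require the most care.
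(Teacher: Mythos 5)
Your proof is correct, and it rests on the same two pillars as the paper's: the density of fixed points of hyperbolic elements of $\GF$ in $\bd\HH$ (via Katok 3.4.4 and the limit set being all of $\bd\HH$) and north--south contraction dynamics, together with the discreteness fact that two hyperbolic elements sharing one boundary fixed point share both (the paper cites the proof of Katok's theorem 2.4.3 for exactly this decoupling, where you cite it as a standard lemma). The difference is in how the repelling-point obstruction is handled. The paper splits into two cases: if the repelling point $r_\g$ of the chosen $\g$ (attracting into the arc of $H_2$) misses the arc of $H_1$, a single power of $\g$ suffices; otherwise it pre-composes with a high power of a second hyperbolic $\d$, moving $H_1$ to an $H_1'$ whose ideal arc avoids $r_\g$, and then contracts with $\g$ --- so the final isometry is a product $\g^k\d^l$ acting directly on the halfplane, with no need to analyze the type of the product. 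You instead run a uniform argument with no case split: ping-pong on four disjoint neighbourhoods shows $g_1^m g_2^n$ is itself hyperbolic with attracting point inside $\mathrm{int}\,\hat H_2$ and repelling point outside $\hat H_1$, and then you power this single element. Your route buys uniformity and a cleaner final statement (one hyperbolic element with prescribed attractor and repeller, a reusable fact), at the cost of the extra verification that the product has two fixed points and is hyperbolic --- a step the paper's more direct composition-of-contractions avoids entirely. Both arguments are complete; yours is a mild but genuine repackaging rather than a different method.
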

\begin{proof}{}
Take arbitrary halfplanes $H_1$ and $H_2$. Let $\g\in\GF$ be a \hyp{} translation with attracting point $a_\g$ lying in the interior of the closed arc $\bd H_2$. If the repelling point $r_\g$ of $\g$ is not in $\bd H_1$, then some multiply composition of $\g$ moves $H_1$ into $H_2$.
(see left picture on fig.~\ref{figH2H}).
Now if $r_\g\in\bd H_1$, then take any $\d\in\GF$ with repelling point $r_\d$ distinct from $a_\g$ and $r_\g$ and not lying in $\bd H_1$. It is clear from the proof of theorem 2.4.3 of \cite{K} that the attracting point $a_\d$ of $\d$ is as well different from $r_\g$. Hence again some multiply composition of $\d$ maps $H_1$ to $H_1'$ which is arbitrarily close to $a_\d$, so that its boundary $\bd H_1'$ does not include point $r_\g$.
(middle picture on fig.~\ref{figH2H}).
Then some multiply composition of $\g$ pushes $H_1'$ into $H_2$.
(see right picture on fig.~\ref{figH2H}).
Composition of these two compositions gives us desired isometry.
\begin{figure}[t]
  \center
\includegraphics[width=0.3\textwidth,bb=0 0 432 427]{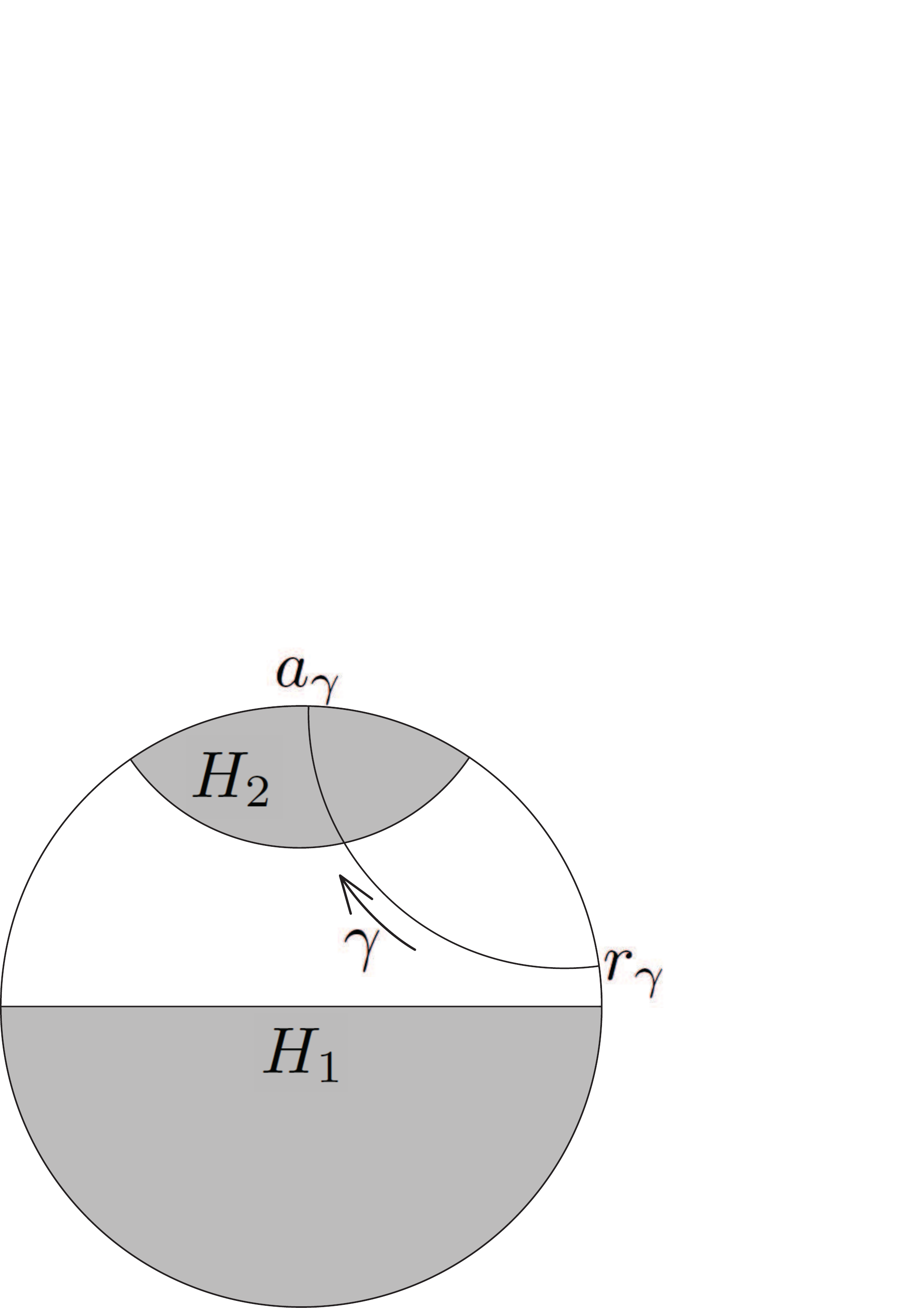}\qquad
\includegraphics[width=0.3\textwidth,bb=0 0 416 445]{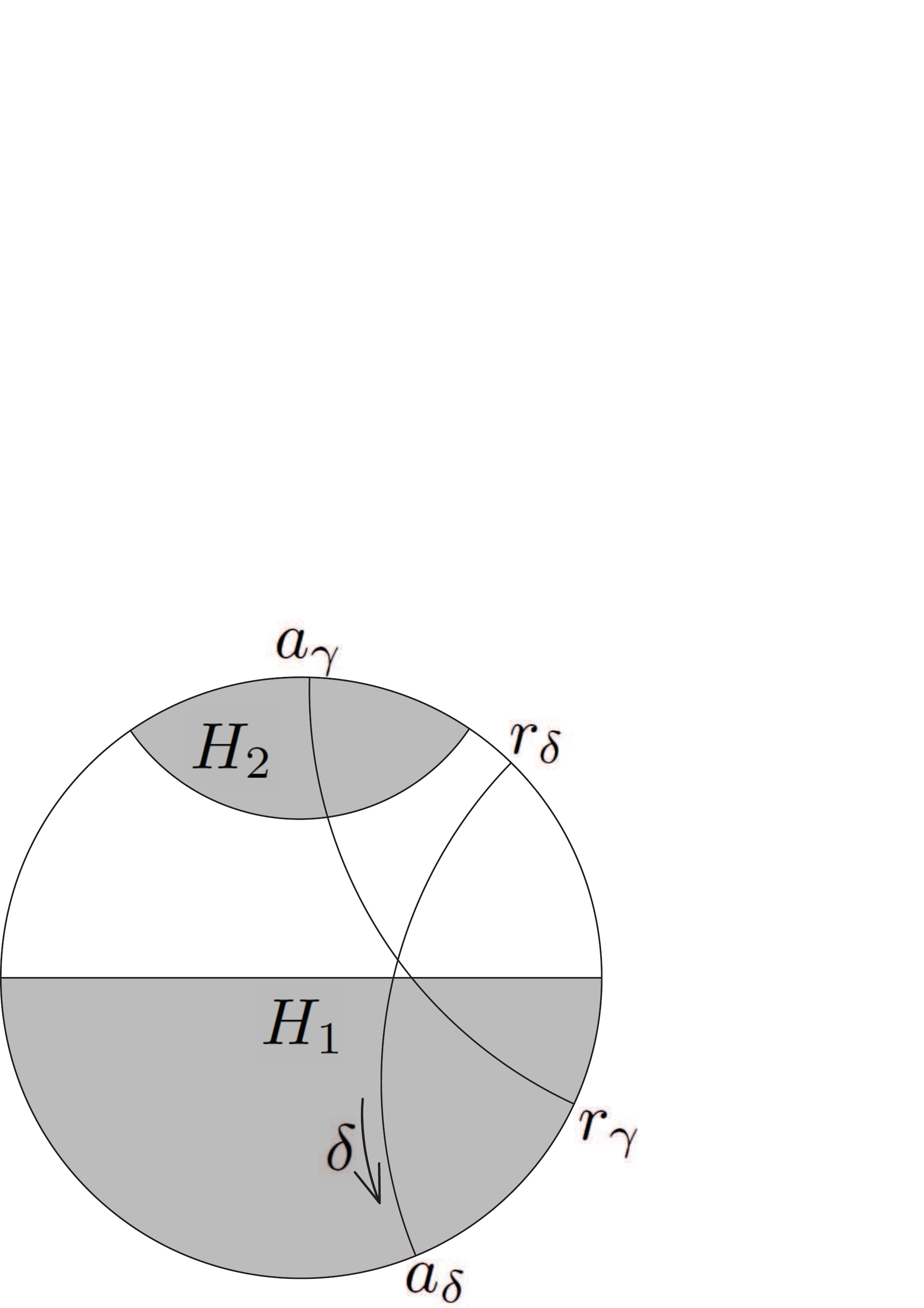}
\includegraphics[width=0.3\textwidth,bb=0 0 416 445]{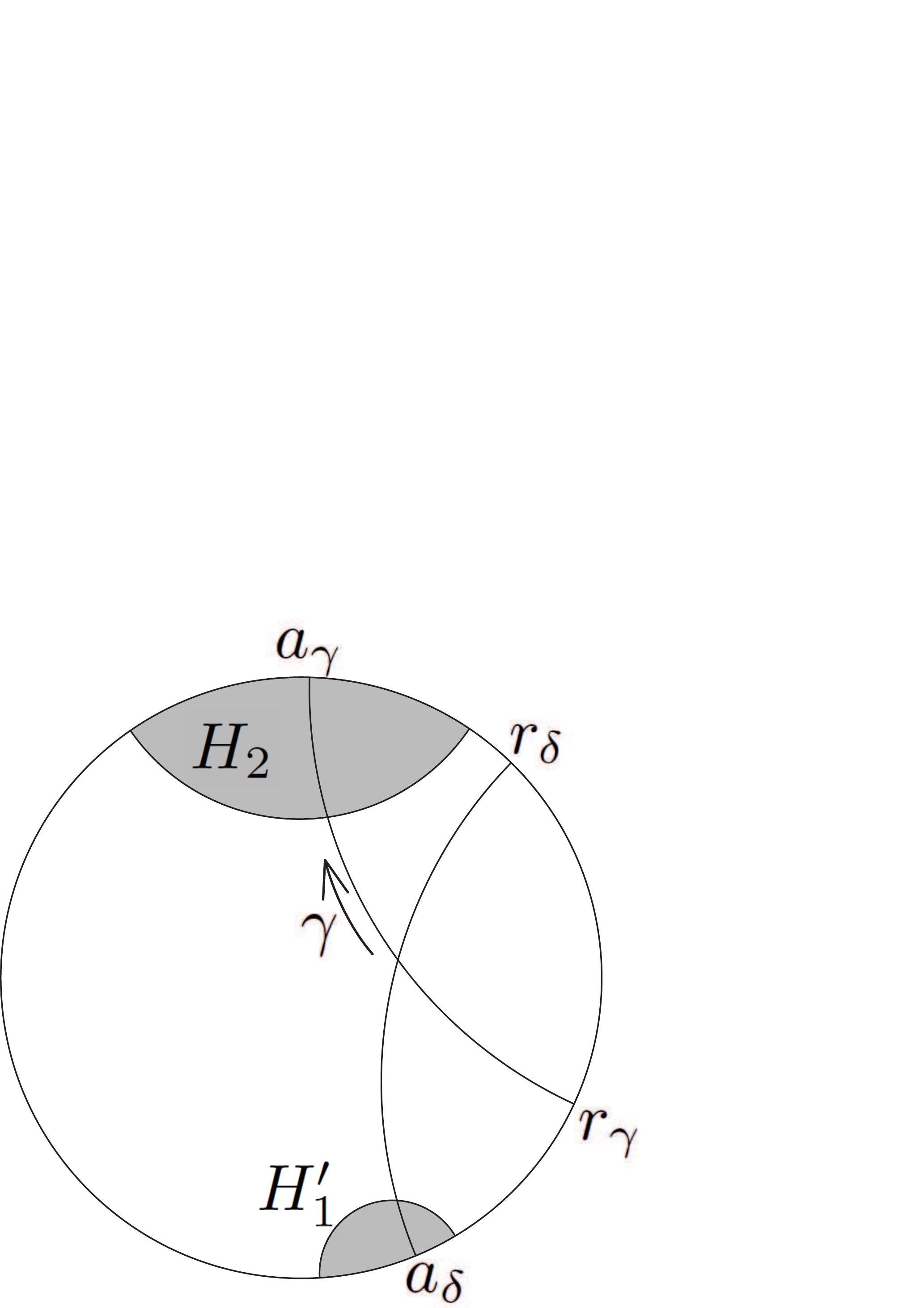}
  \caption{Proof of proposition \ref{H2H}.\label{figH2H}}
\end{figure}
\end{proof}

\begin{proof}{of the lemma}
Let us assume \emph{a contrario} that there is a halfplane $H$ which meets only finitely many \ics{} of $\op$ with positive probability. In such situation the halfplane $H'=\cl{H^\c}$ includes entirely infinitely many \ics{} (by remark \ref{pureph}). Let $H_1,H_2,\ldots$ be a sequence of pairwise disjoint halfplanes all lying in $H$, and even more: such that distances between them are greater than twice the maximal hyperbolic length of an edge in $G$ (see the fig.~\ref{infm H}). By the above proposition we can move $H'$ by some sequence of isometries $\g_1,\g_2,\ldots\in\G$ into $H_1,H_2,\ldots$, respectively.

\begin{figure}[t]
  \center
  \includegraphics[width=0.4\textwidth,bb=0 0 404 404]{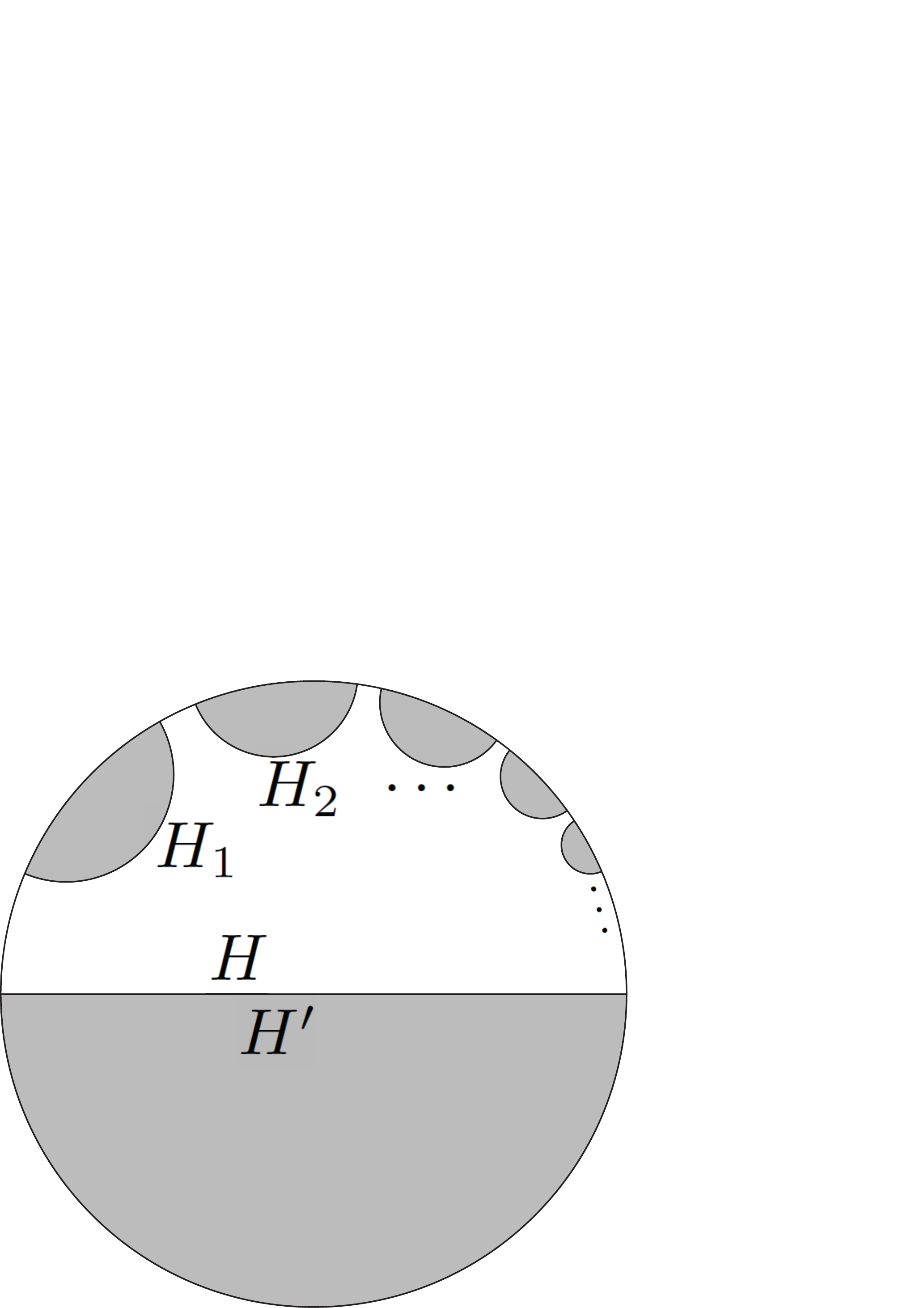}
  \caption{Proof of lemma \ref{H}.\label{infm H}}
\end{figure}
Note that one can precisely say whether a halfplane contains \imic{} looking only on the behaviour of $\op$ on the edges intersecting with that halfplane. So the random event $C(I)$ that in a halfplane $I$ there are \imic{} depends only on those edges, for any halfplane $I$. There follows that events $C(H_1), C(H_2),\ldots$ are stochastically independent, because the underlying sets of edges are pairwise disjoint. Moreover, they have the same positive probability as $C(H')$, so the probability that none of them occurs is less or equal than $(1-\Pr(C(H')))^n$ for any $n\in\mathbb{N}$, whence equal to $0$. That gives us that some $H_n$ \as{} contains \imic{} but so does $H$, because it includes $H_n$, a contradiction. That ends the proof of the lemma.
\end{proof}

Now I state the main theorem:
\begin{thm} \label{main}
In the middle phase of Bernoulli bond percolation on any \vttg{} $G$ a.~s.~all the ends of all the \ics{}
have one-point boundaries in $\bd\HH$.
\end{thm}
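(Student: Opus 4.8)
The plan is to fix, once and for all, the probability-one event on which there are \imic{} (Remark~\ref{pureph}), the limits of paths lie densely in $\bd\HH$ (Lemma~\ref{D}), and every halfplane meets \imic{} (Lemma~\ref{H}), and then to argue deterministically on this event. Recalling Definition~\ref{dfbd}, for an end $e$ of an \ic{} $C$ one has $\bd e=\bigcap_R\bd e(B_R)$, a nested intersection of nonempty compacta in the circle $\bd\HH$, so $\bd e\neq\emptyset$; the goal is to show $|\bd e|=1$. Since any two distinct points of $\bd e$ can be separated by two disjoint closed arcs with rational endpoints, it suffices to prove, for each such pair $A,B$ (a countable family), that a.~s.~no end has a boundary point in each of $A$ and $B$; the main theorem then follows by a countable union.

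So suppose an end $e$ of $C$ has $x\in\bd e$ and $y\in\bd e$ in two disjoint arcs. First I would dispose of the case that $\bd e$ is \emph{disconnected}. Then $\bd\HH\setminus\bd e$ has at least two components, and one finds points $\eta_1,\eta_2\notin\bd e$ and $x,y\in\bd e$ in the cyclic order $x,\eta_1,y,\eta_2$, so that $\eta_1$ and $\eta_2$ lie on the two different arcs of $\bd\HH$ cut out by $x$ and $y$. Fix small circular neighbourhoods $N_1\ni\eta_1$ and $N_2\ni\eta_2$; since $\eta_i\notin\bd e$, for all large $R$ the set $e(B_R)$ avoids $N_1\cup N_2$ while still accumulating at $x$ and at $y$. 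As $e(B_R)$ is connected and lies outside the ball $B_R$ (hence, for large $R$, Euclidean-close to $\bd\HH$ and away from the centre), any path in $e(B_R)$ joining a vertex near $x$ to one near $y$ has a well-defined angular coordinate which, by continuity, must sweep past $\eta_1$ or past $\eta_2$; at that moment the path is simultaneously close to $\bd\HH$ and at the angle of $\eta_i$, i.e.~inside $N_i$ --- a contradiction. This is exactly where planarity and the ``outward'' hyperbolic geometry enter: the escaping end is pinned near the circle. Hence $\bd e$ is connected, i.e.~a single point or a nondegenerate closed arc.

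The hard part is excluding a nondegenerate arc $A'\subseteq\bd e$, and I expect this to be the main obstacle. The naive separation argument fails here: an end accumulating on all of $A'$ hugs $A'$ only in an arbitrarily thin collar, so it cuts off no fixed halfplane, and the complementary faces along $A'$ still reach $\bd\HH$ and may legitimately carry \imic{} of their own, consistently with both Lemma~\ref{D} and Lemma~\ref{H}. I therefore plan to rule out this case not by a purely topological separation but probabilistically, combining Lemma~\ref{H} with the invariance of $\op$ under $\G$ and the transitivity of halfplanes from Proposition~\ref{H2H}: the event that the end through a prescribed edge has an arc in its boundary is invariant under $\G$, and (since the limit set of $\G$ is all of $\bd\HH$) such an arc can be carried by $\GF$ onto any prescribed arc. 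A mass-transport/counting argument along these translates should show that a single end cannot ``claim'' an entire boundary interval without forcing the number of \ics{} meeting suitable halfplanes to be finite, contradicting Lemma~\ref{H}; Theorem~\ref{BSlim}, which already gives each infinite component a path with a \emph{unique} limit point, indicates that ends are genuinely thin and supports this conclusion.

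Finally I would assemble the pieces: on the fixed event $\bd e$ is connected (second paragraph) and contains no nondegenerate arc (third paragraph), and a connected subset of the circle with empty interior is a single point. Taking the countable union over the pairs $A,B$ then shows that a.~s.~every end of every \ic{} has a one-point boundary in $\bd\HH$.
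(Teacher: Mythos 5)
Your reduction to countably many arc pairs and your connectivity step are essentially sound: the angular-sweep argument in the Poincar\'e disc is a legitimate, if model-specific, substitute for the paper's Lemma \ref{sp}, which establishes connectedness of $\bd e$ purely topologically for any end in any locally compact $\mathrm T_{3\frac{1}{2}}$ space. But the heart of the theorem --- ruling out a nondegenerate arc $\Phi\subseteq\bd e$ --- is exactly the step you leave unproved. Your third paragraph is a research plan, not an argument: ``a mass-transport/counting argument \ldots{} should show'' names no transport function, no invariant quantity to count, and no mechanism producing the claimed contradiction with Lemma \ref{H}. Since you yourself identify this as the main obstacle, the proposal has a genuine gap precisely where the main content of the proof lies.

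Moreover, your stated reason for abandoning the separation route is mistaken, and this is worth seeing because the paper makes that route work. Combine Lemma \ref{D} with a two-case analysis of the dense set of limits of paths of $\op$ in an open arc $\Phi\subseteq\bd e$. Either (i) two paths of $\op$ \emph{not} lying in the cluster $a$ have distinct limits in $\Phi$: then for a ball $B$ meeting both, the connected set $e(B)$ accumulates on all of $\Phi$, hence on both sides of the two cuts these paths make through the annulus $\HH\sm B$, so by planarity it must cross one of them --- impossible, as distinct clusters are disjoint. Or (ii) infinitely many of the dense limits in $\Phi$ come from paths \emph{inside} $a$: then four such paths $P_1,P_1',P_2,P_2'$, joined pairwise within $a$, yield two cross-cuts $\sigma,\sigma'\subseteq a$ carving out regions $C$ and $D$, each containing a halfplane; Lemma \ref{H} (whose proof rests on Proposition \ref{H2H} and independence across distant halfplanes) supplies infinite clusters $c\subseteq C$ and $d\subseteq D$ distinct from $a$, and outside a large ball $B$ the union $c\cup d$ separates the tails of $P_1,P_1'$ from those of $P_2,P_2'$, forcing the connected component $e(B)$ to meet both sides while being disjoint from $c$ and $d$ --- a contradiction. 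So the configuration you deemed ``consistent with both Lemma \ref{D} and Lemma \ref{H}'' (other clusters sending paths into the arc) is in fact the easy contradiction, and the remaining case is dispatched by Lemma \ref{H} directly, with no mass-transport scheme needed. To repair your proposal, replace your third paragraph by this case analysis (or actually construct the transport argument you allude to, which the paper never needs).
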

\begin{proof}{}
The techniques used here are similar to those of Lalley used in \cite{Lal}.
Let $\op$ be $p$-Bernoulli bond percolation process on $G$, when $p\in(\pc(G);\pu(G))$. Let us assume \emph{a contrario} that with positive probability there is an end $e$ of an infinite cluster $a$ of $\op$ with non one-point boundary.
%So with the same probability it is the case with the assumption that the graph $\op$ has properties (D) and (H) (because that is satisfied a.~s.).
\par One can prove a topological fact saying that always the boundary of an end is connected and compact (the proof is given in Appendix). So in our situation $\bd e$ is a non-degenerate closed arc in $\bd\HH$, or the whole $\bd\HH$. Let $\Phi$ be an open non-epty arc in $\bd\HH$, included in $\bd e$. By lemma \ref{D} the limits of paths in $\op$ lie densely in $\Phi$. I consider two cases:

\begin{itemize} \item  There are two paths $P_1, P_2\subseteq\op$ not lying in $a$ with distinct limits in $\Phi$. \end{itemize}
\par Let us take a closed ball $B$ in $\HH$, meeting $P_1$ and $P_2$. Then $\bd e(B)$ (and also $\clcH{e(B)}$) contains $\Phi$, and $e(B)$ is connected, but $P_1$ and $P_2$ have limits in $\Phi$ so they should cut $e(B)\subseteq a$, which is a contradiction (see the picture \ref{geom1}).
\begin{figure}[h]
  \center
  \includegraphics[width=0.6\textwidth,bb=0 0 417 437]{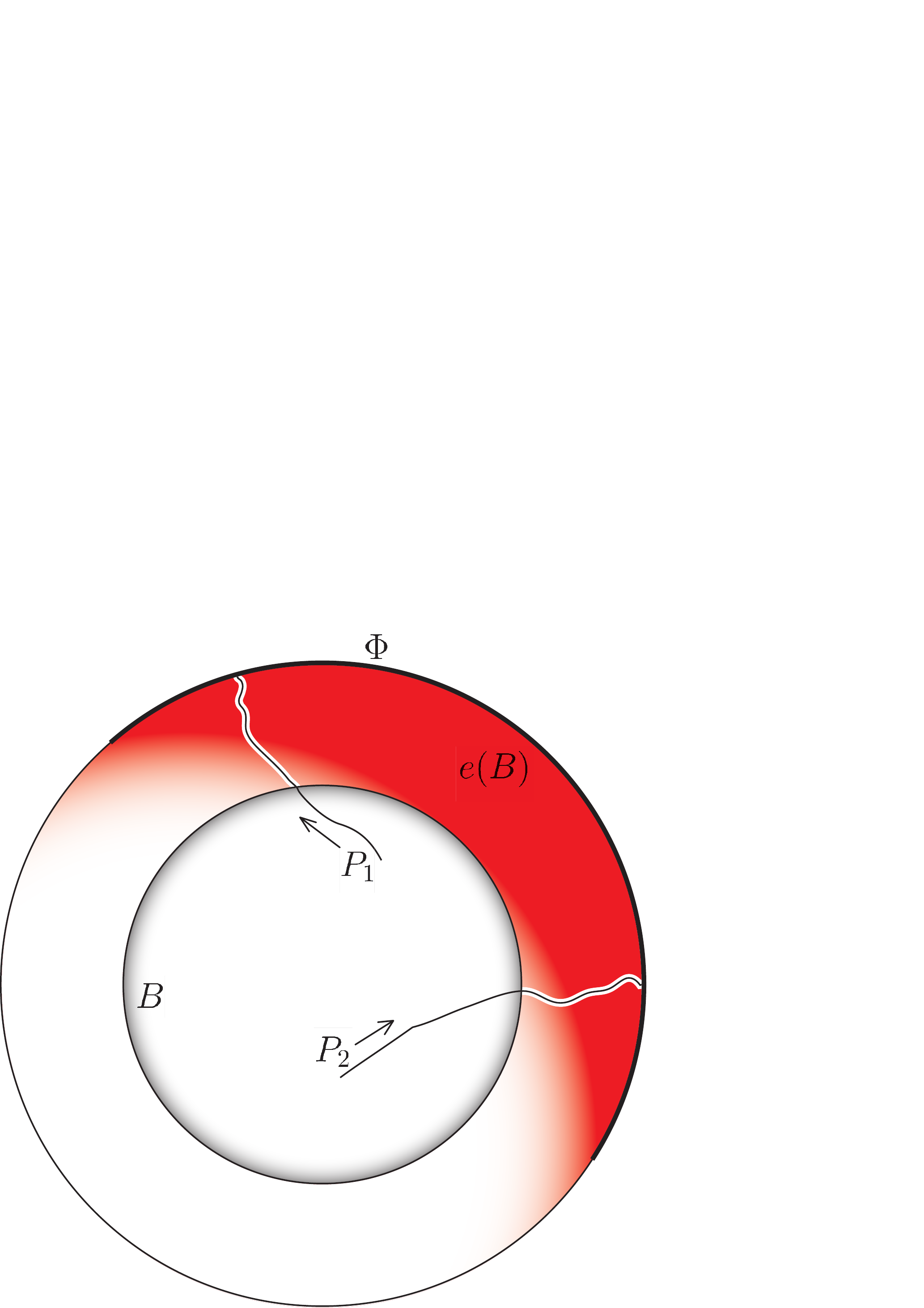}
  \caption{Proof of theorem \ref{main}, the first case.\label{geom1}}
\end{figure}
%Then, similarly as in case \ref{dual}, we have contradiction, because $e(B)\subseteq a$ is connected and $\clcH{e(B)}$ contains $\Phi$ (with limits of $P_1$, $P_2$, so these paths need to cut $e(B)$).

\begin{itemize} \item In $\Phi$ there are infinitely many limits of open paths lying in $a$. \end{itemize}
Then, let us take two such paths $P_1$, $P_2$ with distinct limits in $\Phi$ and two others $P_1'$, $P_2'$ with still other limits in $\Phi$ such as in fig.~\ref{geom22}.
\par We can join $P_1$ and $P_2$ by an open path $P_0$ in $a$ and so $P_1'$ and $P_2'$ by $P_0'$ in $a$. It provides paths $\sigma, \sigma'\subseteq a$ shown in fig.~\ref{geom22}, which disconnects $\HH$ into components, two of which -- $C$ and $D$ -- are shown in the figure. We can take two halfplanes lying in $C$ and $D$, resp. From lemma \ref{H} we know that each of them a.~s.~meets some infinite cluster other than $a$. So one of these clusters lies in $C$ and the other in $D$ -- denote them $c$ and $d$, respectively. So $\bd c\subseteq\bd C$ and $\bd d\subseteq\bd D$, which means that for a sufficiently large ball $B$ the union of $c$ and $d$ disconnects $\HH\setminus B$ into components, two of which are $S_1$ and $S_2$ containing resp.~the tails of $P_1$, $P_1'$ and $P_2$, $P_2'$. But the areas of $S_i$ between $P_i$ and $P_i'$ for $i=1,2$ meet $e(B)$ (because their boudaries lie in $\Phi$) so $e(B)$ meet both $S_1$ and $S_2$, which means that $e(B)$ is not connected (because it is disjoint with $c$ and $d$), a contradiction.

\begin{figure}[t]
  \center
  \includegraphics[width=0.6\textwidth,bb=0 0 417 440]{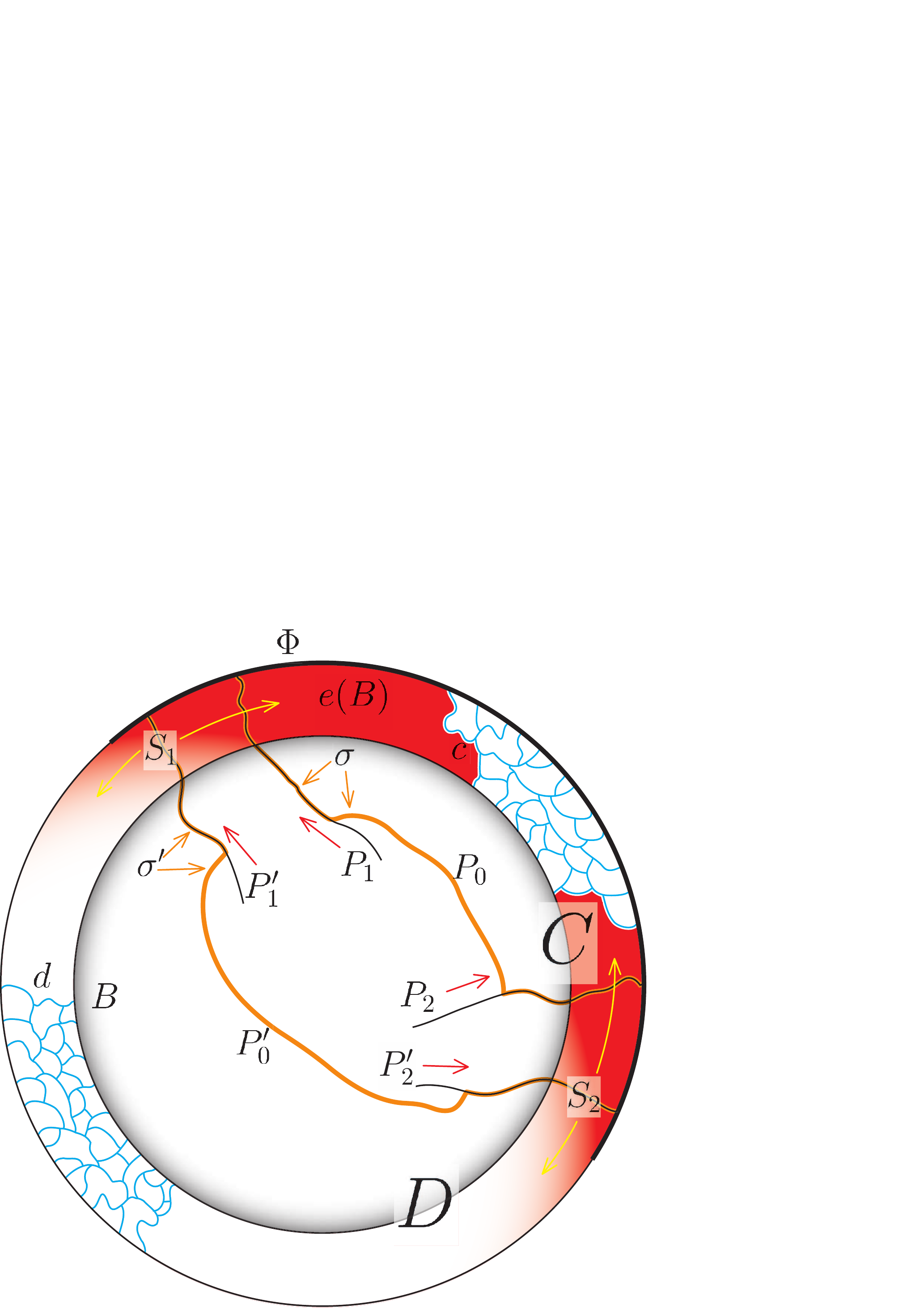}
  \caption{Proof of theorem \ref{main}, the second case.\label{geom22}}
\end{figure}

\par This ends the proof.
\end{proof}

\section{Dual graphs} \label{secdual}

\begin{cor} \label{dualmain}
Theorem \ref{main} also applies to the dual graph of any \vttg{} $G$.
\end{cor}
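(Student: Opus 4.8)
The plan is to mirror the proof of Theorem \ref{main} for the dual percolation $\op(G^\dag)$, after checking that each ingredient survives passage to the dual. First I would record the structural setup: $G^\dag$ is the graph of the dual tiling, and the isometry group $\G$ preserving the original tiling also preserves the dual one, acting on $\HH$ by the same proper, cocompact action. Consequently $G^\dag$ is again \qic{} to $\HH$, hence nonamenable, planar and one-ended, and by remark \ref{pureph} it enjoys the same three essential phases, so its middle phase $(\pc(G^\dag);\pu(G^\dag))$ is nonempty; via the planar duality relations $\pc(G^\dag)=1-\pu(G)$ and $\pu(G^\dag)=1-\pc(G)$. I would then fix $p\in(\pc(G^\dag);\pu(G^\dag))$ and consider $\op(G^\dag)$.

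Next I would transfer the two geometric lemmas. Proposition \ref{H2H} is a statement about $\GF$ and halfplanes of $\HH$ alone, with no reference to the graph, so it holds unchanged. Lemma \ref{H} then goes through for $\op(G^\dag)$ by its original proof essentially verbatim: move a halfplane by elements of $\G$ into a sequence of pairwise disjoint halfplanes whose mutual distances exceed twice the maximal hyperbolic edge length of $G^\dag$, and use $\G$-invariance of $\op(G^\dag)$ together with the independence of the events $C(H_i)$ on the pairwise disjoint sets of edges of $G^\dag$ meeting the $H_i$. The topological fact that the boundary of an end is connected and compact (proved in the Appendix) is completely general, and the two-case contradiction closing Theorem \ref{main} uses only Lemmas \ref{D} and \ref{H} and this fact, all carried out in $\HH$; so it too transfers once the density lemma is available for $G^\dag$.

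The hard part will be Lemma \ref{D} for $\op(G^\dag)$, namely that the limits in $\bd\HH$ of open paths of $\op(G^\dag)$ lie densely. The difficulty is that Theorem \ref{BSlim} and Lemma \ref{BSdns} are quoted for the graph of a \emph{vertex-transitive} tiling, whereas the dual tiling of a \vttg{} is in general only face-transitive, i.e. the $\G$-action on the vertices of $G^\dag$ is merely quasi-transitive. I would resolve this by observing that the proofs of those two results rest on invariance and unimodularity of the percolation under a cocompact group action rather than on strict vertex-transitivity --- indeed remark \ref{pureph} already invokes \cite{BS} for $G^\dag$ --- so the dichotomy of Lemma \ref{BSdns} persists and the set $Z^\dag$ of dual open-path limits is \as{} empty or dense. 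Since $p$ lies in the middle phase, $\op(G^\dag)$ \as{} has \ics{} and, by the analogue of Theorem \ref{BSlim}, each contains a path with a unique boundary limit, so $Z^\dag\neq\emptyset$ and hence $Z^\dag$ is \as{} dense.

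Alternatively, I would obtain Lemma \ref{D} for the dual through the planar duality coupling, in which an edge of $G^\dag$ is open in $\op(G^\dag)$ exactly when the crossing edge of $G$ is closed in the coupled process $\omega^{(1-p)}(G)$; since $1-p\in(\pc(G);\pu(G))$, the primal process sits in its own middle phase, where both primal and dual processes \as{} have \imic{}, and the dual open paths, forming the barriers between primal clusters, inherit a dense limit set. Either way, once Lemmas \ref{D} and \ref{H} hold for $\op(G^\dag)$, running the identical two-case argument of Theorem \ref{main} shows that \as{} every end of every \ic{} of $\op(G^\dag)$ has one-point boundary. I expect the quasi-transitive extension of the density lemma to be the one genuinely delicate point.
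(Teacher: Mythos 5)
Your strategy---rerunning the entire proof of Theorem \ref{main} on $G^\dag$---is workable in outline, and you correctly locate its weak point: Theorem \ref{BSlim} and Lemma \ref{BSdns} are quoted only for graphs of \emph{vertex-transitive} tilings, while the dual tiling is merely face-transitive, so the group acts on $V(G^\dag)$ only quasi-transitively. But at exactly that point you merely assert that the Benjamini--Schramm proofs ``rest on invariance and unimodularity rather than strict vertex-transitivity''; that claim is plausible but nothing quoted in the paper supports it, and verifying it would mean reworking the proofs in \cite{BS}. Your fallback via the duality coupling is likewise a claim rather than an argument: the statement that dual open paths, as ``barriers between primal clusters, inherit a dense limit set'' is precisely a dual version of Lemma \ref{D}, and no step of the proposal actually derives it. So, as written, the proposal has a genuine gap at the spot you yourself flag as delicate. (Your peripheral checks are fine: Proposition \ref{H2H} is pure geometry, Lemma \ref{H} does transfer to $\opd$ since its proof uses only remark \ref{pureph}, invariance and independence over disjoint edge sets, and the duality relations $\pc(G^\dag)=1-\pu(G)$, $\pu(G^\dag)=1-\pc(G)$ are as in \cite{BS}.)

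The paper closes this gap by never needing a dual version of Lemma \ref{D} at all. Let $a$ be an \ic{} of $\opd$ and $e$ an end of $a$ whose boundary contains a nonempty open arc $\Phi$, and apply Lemma \ref{D} to the \emph{primal} process $\op$, for which it is already proved: limits of paths of $\op$ are \as{} dense in $\Phi$, so there are paths $P_1,P_2\subseteq\op$ with distinct limits in $\Phi$. Since an open primal edge crosses only its own dual edge, which is then closed in $\opd$ (and dual edges avoid primal vertices by construction), the open primal subgraph and the open dual subgraph are disjoint as subsets of $\HH$; hence $P_1$ and $P_2$ automatically do not lie in $a$, and only the \emph{first} case of the proof of Theorem \ref{main} is ever needed: $e(B)\subseteq a$ is connected, $\clcH{e(B)}$ contains $\Phi$, and $P_1,P_2$ would have to cut $e(B)$ --- a contradiction. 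The second case never arises, Lemma \ref{H} is not invoked for the dual, and no quasi-transitive extension of the results quoted from \cite{BS} is required. This duality observation is the missing idea in your proposal, and it replaces the ``genuinely delicate point'' by a two-line reduction.
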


Let us introduce notions of duality:
\begin{df} \label{dfdual}
For any plane graph $G$ one defines its \textbf{dual graph} $G^\dag$: the set of vertices is the set of faces of $G$ and two such vertices are joined by an edge, iff the corresponding faces are neighbours by an edge in $G$. (Note that to define the dual graph the plane realization is needed, not only the abstract graph.)
Such dual graph is also a planar graph, because one can realize it in the plane placing its vertices inside the faces of the original graph $G$ (called also the \textbf{primal graph}), and constructing the edges as some plane paths leading from any vertex of the dual graph to an interior point of an edge of the face including it, then to the vertex inside the second face touching that edge. We call the constructed edge the \textbf{dual edge} to the original edge, which is cut by it in exactly one point.
\end{df}
\begin{rem}
The dual graph of a plane graph $G$ does not need to be a simple graph. (But in our situation it is.)
\end{rem}

I will consider the dual percolation process for $\op$, which I define below:
\begin{df}
For a plane graph $G$ of a polygonal tiling and for any edge $e$ of $G$ let $e^\dag\in G^\dag$ denote the dual edge of $e$ in the dual graph $G^\dag$. (The operation $e\mapsto e^\dag$ is a bijection between $E(G)$ and $E(G^\dag)$.) Now let for any subgraph $H$ of $G$ the ,,dual subgraph'' $H^\dag$ be the subgraph of $G^\dag$ such that $V(H^\dag)=V(G^\dag)$ and $E(H^\dag)=\{e^\dag:e\in E(G)\sm E(H)\}$.
\par Then the random subgraph $\opd$ (dual to $\op$) is called the \textbf{dual percolation process} (dual to $\op$).
\end{df}
\begin{rem} \label{uw_dual}
Notice that $\opd$ is in fact a $(1-p)$-Bernoulli bond percolation on $G^\dag$
\end{rem}

%(The \textbf{dual graph} $G^\dag$ of a plane graph $G$ is obtained by taking all faces of $G$ for vertices of $G^\dag$ and joining them with an edge in $G^\dag$ iff the corresponding faces are neighbours.)
\begin{proof}{of the corollary}
For given \vttg{} $G$ and its dual $G^\dag$, use the fact that in the middle phase percolation on both the graphs we have \imic{} (see remark \ref{pureph}). Then in setting of proof of theorem \ref{main} (with $\Phi$, $a$ and $e$), but with $a$ -- component of $\opd$ instead of $\op$, we know by lemma \ref{D} that the limits of paths in $\op$ lie densely in $\Phi$. So there are two paths $P_1, P_2\subseteq\op$ with distinct limits in $\Phi$. Then, similarly as in the first case in proof of the theorem, we have contradiction, because $e(B)\subseteq a$ is connected and $\clcH{e(B)}$ contains $\Phi$ (with limits of $P_1$, $P_2$, so these paths need to cut $e(B)$). See figure \ref{geom1}.
\end{proof}

\section*{Appendix}%\app...

In this appendix I consider a topological space $X$ as in definition \ref{dfbd} (i.~e. locally compact and $\mathrm T_{3\frac{1}{2}}$) together with some compactification $\cX$ of it.
\begin{rem}
Recall that $\bd X=\cX\sm X$ is always closed (and $X$ is open) in $\cX$.
\par There is topological notion of boundary (with other meaning than $\bd$ in definition \ref{dfbd}). Due to it I will call that notion \textbf{topological boundary}.
\par Now let us consider a set $A\subseteq X$ and its arbitrary end $e$. Notice that then for any compact $K\subseteq X$ the set $\bd e(K)$ is compact (as subspace of $\cX$).
\par It is worth noting that in the above setting $\bd e(K)\neq\emptyset$. It is so because $e(K)$ is not conditionally compact in $X$; if it were, $\clX{e(K)}$ would be compact and $e(K\cup\clX{e(K)})\subseteq e(K)$, but $e(K\cup\clX{e(K)})$ and $\clX{e(K)}$ are disjoint (by the definition of end), so $e(K\cup\clX{e(K)})=\emptyset$, which contradicts the definition of component.
\par Similarly, $\bd e$ itself is non-empty as an intersection of family of compact sets from the definition, whose each finite subfamily has, by an easy excercise, non-empty intersection.
\end{rem}
\begin{lem} \label{sp}
For any topological space $X$, which is locally compact and $\mathrm T_{3\frac{1}{2}}$ (as above) and for any compactification $\cX$ of it and for any $a\subseteq X$ every end $e$ of $a$ has non-empty connected boundary.
\end{lem}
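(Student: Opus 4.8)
The plan is to reduce the statement to the classical fact that a downward-directed intersection of subcontinua of a compact Hausdorff space is again a continuum, and the crux will be to show that the intersection defining $\bd e$ omits every point of $X$, so that it coincides with such an intersection of continua sitting in $\cX$. First I would record the basic building blocks. Each $e(K)$ is a component of $a\sm K$, hence connected as a subspace of $X$ and therefore also as a subset of $\cX$ (the subspace topology is the same). Consequently $\clcX{e(K)}$ is a non-empty continuum: it is closed in the compact space $\cX$, hence compact, and it is connected as the closure of a connected set, while non-emptiness of $e(K)$ and of $\bd e(K)=\clcX{e(K)}\sm X$ is exactly what the preceding remark establishes. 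As $K$ ranges over the compact subsets of $X$ directed by inclusion, the end axiom gives $e(K)\supseteq e(K')$ whenever $K\subseteq K'$, so the continua $\clcX{e(K)}$ form a family directed downward by inclusion (for two compacts $K_1,K_2$ one passes to $K_1\cup K_2$). Write $C=\bigcapKX\clcX{e(K)}$.

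Next I would prove the key identification $\bd e=C$, which amounts to showing $C\cap X=\emptyset$. Fix $x\in X$. Since $X$ is locally compact, choose a compact neighbourhood $N$ of $x$; taking $K=N$, the set $e(N)\subseteq a\sm N$ is disjoint from $\sint N$, and $\sint N$ is open in $X$, hence open in $\cX$, and contains $x$. Therefore $x\notin\clcX{e(N)}\supseteq C$, and since $x$ was arbitrary, $C\subseteq\bd X$. Now the definition gives $\bd e=\bigcapKX\bigl(\clcX{e(K)}\sm X\bigr)=\bigl(\bigcapKX\clcX{e(K)}\bigr)\sm X=C\sm X=C$, using $C\cap X=\emptyset$ in the last equality. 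Thus it suffices to see that $C$ is a non-empty continuum.

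Finally I would supply the main lemma. Non-emptiness of $C$ follows from the finite intersection property: the sets $\clcX{e(K)}$ are non-empty compacts and the family is directed, so every finite subfamily contains a member and hence has non-empty intersection. For connectedness I argue by contradiction: suppose $C=F_1\sqcup F_2$ with $F_1,F_2$ non-empty, disjoint and closed (hence compact) in $\cX$. Since $\cX$ is compact Hausdorff, it is normal, so there are disjoint open sets $U_1\supseteq F_1$ and $U_2\supseteq F_2$. The compacts $\clcX{e(K)}\cap(\cX\sm(U_1\cup U_2))$ are directed downward and their intersection is $C\cap(\cX\sm(U_1\cup U_2))=\emptyset$, so by compactness one of them is already empty; that is, $\clcX{e(K_0)}\subseteq U_1\cup U_2$ for some $K_0$. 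But $\clcX{e(K_0)}\supseteq C\supseteq F_1\cup F_2$ meets both $U_1$ and $U_2$, so $U_1$ and $U_2$ split the connected set $\clcX{e(K_0)}$ into two non-empty relatively open pieces, a contradiction. Hence $C=\bd e$ is connected.

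The routine part is the normality-and-directedness argument of the last paragraph, which is standard continuum theory. The one genuinely delicate step, and the one I would treat most carefully, is the identification $\bd e=C$: everything hinges on the intersection capturing no interior point of $X$, and this is precisely where local compactness of $X$ is used; without it the remainder $C\sm X$ could in principle be disconnected even though $C$ is a continuum.
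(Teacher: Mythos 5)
Your proof is correct, but it takes a genuinely different route from the paper's. You reduce the lemma to the classical fact that a downward-directed family of subcontinua of a compact Hausdorff space has connected (and non-empty) intersection, applied to the closures $\clcX{e(K)}$; the pivotal step is then your identification $\bd e=\bigcapKX\clcX{e(K)}$, where local compactness of $X$ is used to show that no point of $X$ survives the intersection (your compact-neighbourhood argument, which tacitly uses that $X$ is open in $\cX$ -- a fact the paper records in the remark preceding the lemma, so no gap there). The paper instead works throughout with the boundaries $\bd e(K)$ rather than the closures: after separating a putative splitting $\bd e=C\cupdot D$ by disjoint open $U,V$ (normality of $\cX$, exactly as in your argument), it finds by a finite-subcover argument one compact $K$ with $\bd e(K)\subseteq U\cupdot V$ -- the analogue of your step producing $K_0$ -- and then, in a second step with no counterpart in your proof, enlarges $K$ to $K'=K\cup\bigl(\clcX{e(K)}\sm(U\cup V)\bigr)$ so that the whole component $e(K')$ lies in $U\cupdot V$; the contradiction is then with connectedness of the component $e(K')$ inside $X$, rather than with connectedness of a closure inside $\cX$. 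Your route buys two things: it dispenses entirely with the paper's delicate enlargement step (its second Claim), and it yields non-emptiness of $\bd e$ for free, since $\bigcapKX\clcX{e(K)}\neq\emptyset$ follows from the finite intersection property using only $e(K)\neq\emptyset$, so you do not actually need the preceding remark's separate argument that each $\bd e(K)\neq\emptyset$. What the paper's route buys is that it never needs the identification of $\bd e$ with an intersection of continua, keeping the argument at the level of the sets $\bd e(K)$ appearing verbatim in the definition; your version is arguably cleaner, outsourcing the core to a standard continuum-theory lemma.
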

\begin{proof}{}
The set $\bd e$ is non-empty by the above remark, so it remains to show the connectivity.
\par Let us assume \emph{a contrario} that $\bd e$ is not connected. Then it is a sum of two closed disjoint non-empty sets $C,D\in\cX$:
$$\bd e= C\cupdot D.$$
Because $\cX$ is normal, there exist disjoint open neighbourhoods $U$ and $V$ in $\cX$ of the sets respectively $C$ and $D$.
\begin{claim}
There is a compact set $K\subseteq X$ such that $\bd e(K)\subseteq U\cupdot V$.
\end{claim}
\begin{proof}{}
Let us consider the family $\{U\cup V,(\bd e(K))^c:K\subseteq X, K\text{ -- compact}\}$. It is an open cover of $\cX$, because
$$\cX = (U\cup V)\cup(\bd e)^c = (U\cup V)\cup\bigcup_{\substack{K\subseteq X\\ K\text{ -- compact}}}(\bd e(K))^c.$$
Hence there is a finite subcover $\{U\cup V, (\bd e(K_1))^c,\ldots,(\bd e(K_n))^c\}$ for some compact $K_1,\ldots,K_n\subseteq X$.
Let us take $K=\bigcup_{i=1}^n K_i$. Then
$$\bd e(K)\subseteq \bigcap_{i=1}^n\bd e(K_i)\text{,\quad so\quad}\bigcup_{i=1}^n(\bd e(K_i))^c\subseteq(\bd e(K))^c$$
and $\{U\cup V,(\bd e(K))^c\}$ is also a cover of $\cX$. Hence $\bd e(K)\subseteq U\cupdot V$ as we desired.
\end{proof}

\begin{figure}
  \center
  \includegraphics[width=.6\textwidth,bb=0 0 357 386]{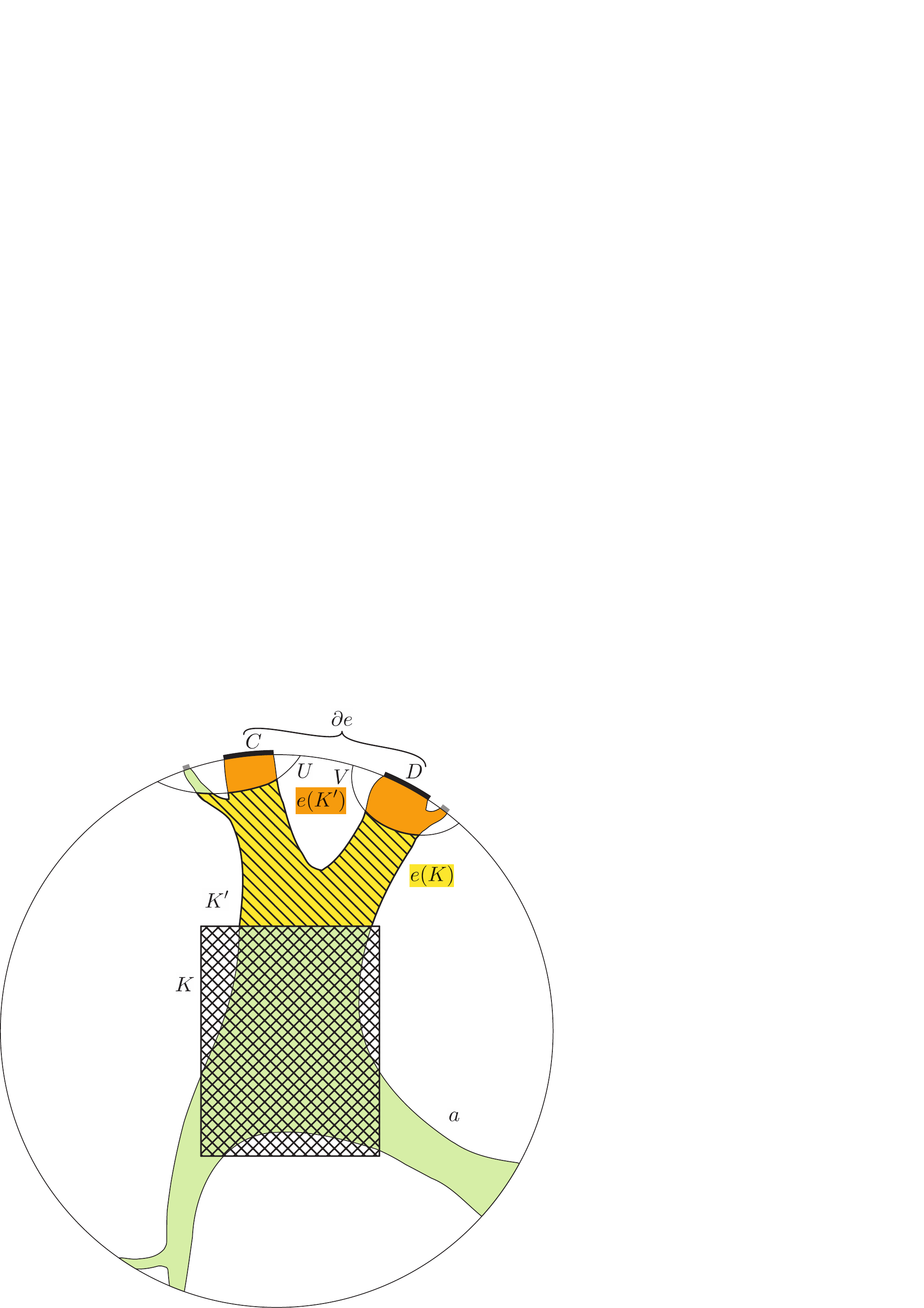}
  \caption{Proof of lemma \ref{sp}.\label{rys:sp}}
\end{figure}
\begin{claim}
There exist $K'$ -- a superset of $K$ such that even $e(K')\subseteq U\cupdot V$.
\end{claim}
\begin{proof}{}
The set $\clcX{e(K)}\sm(U\cup V)$ is a compact subset of $X$, because it is closed in $\cX$ and disjoint with $\bd\cX$.
\par So let $K'=K\cup\left(\clcX{e(K)}\sm(U\cup V)\right)$ be a compact subset of $X$. Then
$$e(K')\subseteq e(K)\sm K'\subseteq e(K)\sm\left(\clcX{e(K)}\sm(U\cup V)\right)\subseteq U\cupdot V,$$
and
$$\bd e(K')\subseteq \bd e(K)\subseteq U\cupdot V,$$
but on the other hand
$$C\cupdot D=\bd e\subseteq \bd e(K').$$
It follows that $\bd e(K')$ intersects both $U$ and $V$. Hence $e(K')\subseteq U\cupdot V$ is not connected, which cotradicts the definition of end.
\end{proof}
That finishes the proof of the lemma.
\end{proof}

\end{document}